\newtheorem{thm}{Theorem}[section]
\newtheorem{prop}[thm]{Proposition}
\newtheorem{lemma}[thm]{Lemma}
\newtheorem{cor}[thm]{Corollary}
\theoremstyle{remark}
\newcommand{\id}{{\rm{id}}}
\newcommand{\Ad}{{\rm{Ad}}}
\newcommand{\BN}{\mathbf N}
\newcommand{\BR}{\mathbf R}
\newcommand{\BC}{\mathbf C}
\newcommand{\BK}{\mathbf K}
\newcommand{\BB}{\mathbf B}
\newcommand{\la}{\langle}
\newcommand{\ra}{\rangle}
\newcommand{\Aut}{{\rm{Aut}}}
\newtheorem{Def}{Definition}[section]
\title{Strong Morita equivalence for inclusions of $C^*$-algebras induced by
twisted actions of a countable discrete group}
\author{Kazunori Kodaka}
\address{Department of Mathematical Sciences, Faculty of Science, Ryukyu
University,
\endgraf
Nishihara-cho, Okinawa, 903-0213, Japan}
\address{\sl{E-mail address}: \rm{kodaka@math.u-ryukyu.ac.jp}}
\begin{document}

\begin{abstract}
We consider two twisted actions of a countable discrete group on $\sigma$-unital $C^*$-algebras.
Then by taking the reduced crossed products, we get two inclusions of $C^*$-algebras.
We suppose that they
are strongly Morita equivalent as inclusions of
$C^*$-algebras. Also, we suppose that one of the inclusions of $C^*$-algebras is irreducible, 
that is, the relative commutant of one of the $\sigma$-unital $C^*$-algebra in the multiplier
$C^*$-algebra of the reduced twisted crossed product is trivial. We show that
the two actions are then strongly Morita equivalent up to some automorphism of the
group.
\end{abstract}

\maketitle

\section{Introduction}\label{sec:intro} In the previous papers \cite{KT3:equivalence}, \cite{KT4:morita},
we discussed strong Morita equivalence for twisted coactions of a finite dimensional $C^*$-Hopf algebra
on unital $C^*$-algebras and unital inclusions of unital $C^*$-algebras. Also, in \cite {KT5:inclusion2}, we
clarified the relation between strong Morita equivalence for such twisted coactions and
strong Morita equivalence for the associated unital inclusions of unital $C^*$-algebras.
In the present paper, we shall discuss the same subject as above in the case of twisted
actions of a countable discrete group on $\sigma$-unital $C^*$-algebras.
\par
Let $(\alpha, w_{\alpha})$ and $(\beta, w_{\beta})$ be twisted actions of a countable discrete
group $G$ on $\sigma$-unital $C^*$-algebras $A$ and $B$, and let $A\times_{\alpha, w_{\alpha}, r}G$ and
$B\rtimes_{\beta, w_{\beta}, r}G$ denote the twisted reduced
crossed products of $A$ and $B$ by $(\alpha, w_{\alpha})$ and $(\beta, w_{\beta})$, respectively.
Then we obtain the inclusions of $C^*$-algebras
$A\subset A\rtimes_{\alpha, w_{\alpha}, r}G$ and $B\subset B\rtimes_{\beta, w_{\beta}, r}G$.
In this paper, we first review that
if $(\alpha, w_{\alpha})$ and $(\beta, w_{\beta})$ are
strongly Morita equivalent, then in the same way as in 
Combes \cite {Combes:morita} or Curto, Muphy and Williams
\cite {CMW:equivalence}, the inclusions $A\subset A\rtimes_{\alpha, w_{\alpha}, r}G$
and $B\subset B\rtimes_{\beta, w_{\beta}, r}G$
are strongly Morita equivalent. However, it is not clear that the converse result holds in general.
Our main result, Theorem \ref{thm:strong4}, is as follows:
We suppose that the inclusion $A\subset A\rtimes_{\alpha, w_{\alpha}, r}G$ is irreducible, that is,
$A' \cap M(A\rtimes_{\alpha, w_{\alpha}, r}G)=\BC 1$.
If the inclusions $A\subset A\rtimes_{\alpha,w_{\alpha}, r}G$
and $B\subset B\rtimes_{\beta, w_{\beta}, r}G$ are strongly Morita equivalent,
then there is an automorphism $\phi$ of $G$
such that $(\alpha^{\phi}, w_{\alpha}^{\phi})$ and $(\beta, w_{\beta})$ are
strongly Morita equivalent, where $(\alpha^{\phi}, w_{\alpha}^{\phi})$ is
the twisted action of $G$ on $A$
defined by
$\alpha_t^{\phi}(a)=\alpha_{\phi(t)}(a)$ and $w_{\alpha}^{\phi}(t, s)=w_{\alpha}(\phi(t), \phi(s))$
for any $t, s\in G$, $a\in A$. This result is similar to \cite [Theorem 6.2]{KT5:inclusion2}.
We remark that the condition $A' \cap M(A\rtimes_{\alpha,w_{\alpha}, r}G)=\BC1$ holds
if and only if
$(\alpha , w_{\alpha})$ is a free twisted action of $G$ on $A$ (cf. Corollary \ref{cor:free5}).
This freeness of $(\alpha, w_{\alpha})$ plays an important role in Section \ref{sec:inclusion}.
\par
We let $\BK$ denote the $C^*$-algebra of all compact operators on a countably infinite dimensional Hilbert space
and $\{e_{ij}\}_{ij\in\BN}$ its system of matrix units. By a homomorphism between two $C^*$-algebras, we will
always mean a *-homomorphism. This also applies to isomorphisms and automorphisms.
For each 
$C^*$-algebra $A$, we denote by $M(A)$ the multiplier $C^*$-algebra of $A$.
Let $\pi$ be an isomorphism of $A$ onto
a $C^*$-algebra $B$. Then there is a unique strictly continuous isomorphism of $M(A)$ onto $M(B)$
extending $\pi$ by Jensen and Thomsen \cite [Corollary 1.1.15]{JT:KK}. We denote it by $\underline{\pi}$.
For an algebra $A$, we denote by $\id_A$ the identity map on $A$; if $A$ is unital, we denote by $1_A$ the
unit of $A$.
If no confusion arises, we denote them by $\id$ and $1$, respectively.
Throughout this paper, we denote by $G$ a countable discrete group with unit element $e$.
 
\section{Preliminaries}\label{sec:pre} First, we give some definitions.
Let $A$ be a $C^*$-algebra
and $G$ a countable discrete group with the unit element $e$.
Let $\Aut(A)$ be the group of all automorphisms of $A$ and $U(M(A))$ the group of all unitary elements
in $M(A)$.

\begin{Def}\label{def:pre1} By a 
\sl
twisted action
\rm
$(\alpha, w_{\alpha})$ of $G$ on $A$, we mean a map $\alpha$ from $G$ to $\Aut(A)$ and a
map $w_{\alpha}$ from $G\times G$ to $U(M(A))$ satisfying following:
\newline
(1) $\alpha_t \circ \alpha_s =\Ad (w_{\alpha}(t, s))\circ \alpha_{ts}$,
\newline
(2) $w_{\alpha}(t, s)w_{\alpha}(ts, r)=\underline{\alpha_t} (w_{\alpha}(s, r))w_{\alpha}(t, sr)$,
\newline
(3) $w_{\alpha}(t, e)=w_{\alpha}(e, t)=1_{M(A)}$
\newline
for any $t, s, r\in G$.
\end{Def}

By easy computations, for any $t\in G$,
\begin{align*}
& \alpha_e =\id, \quad w_{\alpha}(t, t^{-1} )=\underline{\alpha_t} (w_{\alpha}(t^{-1}, t)) , \\
& \alpha_t^{-1}=\alpha_{t^{-1}}\circ\Ad(w_{\alpha}(t, t^{-1})^* )=\Ad(w_{\alpha}(t^{-1}, t)^* )\circ\alpha_{t^{-1}}
\end{align*}

Let $(\alpha, w_{\alpha})$ be a twisted action of $G$ on a $C^*$-algebra $A$. Then
we get a twisted action $(\underline{\alpha}, w_{\alpha})$ of $G$ on $M(A)$ such that
$\underline{\alpha_t}$ is the unique strictly continuous automorphism of $M(A)$ extending
$\alpha_t $ to $M(A)$ for any $t\in G$.
\par
We regard $A$ and $M(A)$ as $C^*$-subalgebras of $A\rtimes_{\alpha, w_{\alpha}, r}G$ and
$M(A)\rtimes_{\underline{\alpha}, w_{\alpha}, r}G$ in the usual way, respectively.
By considering the universal representation of $A$ on a Hilbert space $\mathcal{H}_u$, we may regard
$M(A)\rtimes_{\underline{\alpha}, w_{\alpha}, r}G$ and $A\rtimes_{\alpha, w_{\alpha}, r}G$ as 
acting non-degenerately on the same Hilbert space $l^2 (G, \mathcal{H}_u )$.
Then we can see that $M(A)\rtimes_{\underline{\alpha}, w_{\alpha}, r}G
\subset M(A\rtimes_{\alpha, w_{\alpha}, r}G)$ (cf. Pedersen \cite[Sections 3.12 and 7.7]{Pedersen:auto}
for the case of an untwisted action).
Thus we have the following inclusions of $C^*$-algebras:
\begin{align*}
A & \subset A\rtimes_{\alpha, w_{\alpha}, r}G \subset M(A\rtimes_{\alpha, w_{\alpha}, r}G) \\
M(A) & \subset M(A)\rtimes_{\underline{\alpha}, w_{\alpha}, r}G
\subset M(A\rtimes_{\alpha, w_{\alpha}, r}G) .
\end{align*}
We may also form $(\alpha\otimes\id_{\BK} \, , \, w_{\alpha}\otimes 1_{M(\BK)})$,
the twisted action of $G$ on $A\otimes\BK$. As is well-known, 
there is an isomorphism of $(A\otimes\BK)\rtimes_{\alpha\otimes\id, w_{\alpha}\otimes 1, r}G$ onto
$(A\rtimes_{\alpha, w_{\alpha},r}G)\otimes\BK$ such that its restriction to $A\otimes\BK$ is the identity
map on $A\otimes\BK$. Thus
$$
A\otimes\BK\subset(A\otimes\BK)\rtimes_{\alpha\otimes\id, w_{\alpha}\otimes 1, r}G
$$ 
and
$$
A\otimes\BK\subset(A\rtimes_{\alpha, w_{\alpha}, r}G)\otimes\BK
$$
are isomorphic as inclusions of $C^*$-algebras. We identify them as inclusions of $C^*$-algebras
in this paper.
\par
It is also well-known that there exists a canonical faithful conditional expectation from
$A\rtimes_{\alpha, w_{\alpha}, r}G$ onto $A$. The definition of a conditional expectation can be found
in Blackadar \cite [II. 6.10]{Blackadar:operator}. One way to construct $E^A$ is as follows.
\par
Let $E^{M(A)}$ be the faithful canonical conditional expectation from 
$M(A)\rtimes_{\underline{\alpha}, w_{\alpha}, r}G$ onto $M(A)$ defined in B\'edos and Conti
\cite [Section 3]{BC:discrete}. Then we may let $E^A$ be the restriction
of $E^{M(A)}$ to $A\rtimes_{\alpha, w_{\alpha}, r}G$,
that is, $E^A =E^{M(A)}|_{A\rtimes_{\alpha, w_{\alpha}, r}G}$.
\par
For each $t\in G$, let $\delta_t $ be the function from $G$ to $M(A)$ defined by
$$
\delta_t (s) = \begin{cases} 1_{M(A)} & \text{if $s=t$} \\
0 & \text{if $s\ne t$} .
\end{cases}
$$
Since $\delta_t \in l^1 (G, M(A))$, $\delta_t\in M(A)\rtimes_{\underline{\alpha}, w_{\alpha}, r}G$.
We regard $\delta_t$ as an element in $M(A\rtimes_{\alpha, w_{\alpha},  r}G)$
by the above inclusion $M(A)\rtimes_{\underline{\alpha}, w_{\alpha}, r}G \subset M(A\rtimes_{\alpha, w_{\alpha}, r}G)$.
\par
Let $\{u_i \}_{i\in I}$ be an approximate unit of $A$ with $||u_i||\leq 1$ for any $i\in I$.
We fix the approximate unit $\{u_i \}_{i\in I}$ of $A$ in this paper.
Let $x\in M(A\rtimes_{\alpha, w_{\alpha}, r}G)$. Then $\{E^A (x(u_i \delta_t^* ))\}_{i\in I}$
is a Cauchy net in $M(A)$ under the strict topology
for any $t\in G$ since $\alpha_t (a)=\delta_t a\delta_t^*$ for any $t\in G$ and $a\in A$.
Indeed, for any $i, j\in I$, $a\in A$,
\begin{align*}
||a(E^A (xu_i \delta_t^* )-E^A  (xu_j \delta_t^* ))|| & =||aE^A (x\delta_t^* (\alpha_t (u_i )-\alpha_t (u_j )))|| \\
& =||E^A (ax\delta_t^* (\alpha_t (u_i )-\alpha_t (u_j )))|| \\
& =||E^A (ax\delta_t^* )(\alpha_t (u_i )-\alpha_t (u_j ))|| \\
& =||\alpha_t ^{-1}(E^A (ax\delta_t^* ))(u_i -u_j)||\to 0 \quad(i, j\to\infty) .
\end{align*}
Similarly, $||(E^A (xu_i \delta_t^* )-E^A (xu_j \delta_t^* ))a||\to 0$ $(i, j\to\infty)$.
Let $x_t =\lim_i E^A (xu_i \delta_t^* )$ for every $x\in M(A\rtimes_{\alpha, w_{\alpha}, r}G)$ and
$t\in G$, where the limit is taken under the 
strict topology in $M(A)$. Then $x_t \in M(A)$ for all $t\in G$.

\begin{Def}\label{def:pre5} Let $x\in M(A\rtimes_{\alpha, w_{\alpha}, r}G)$.
For any $t\in G$, let $x_t$ be the element in $M(A)$ defined
in the above. We call $x_t $ the {\sl Fourier coefficient} of
$x\in M(A\rtimes_{\alpha, w_{\alpha}, r}G)$ at $t\in G$,
and call $\{x_t\}_{t\in G}$ the {\sl Fourier coefficients} of $x\in M(A\rtimes_{\alpha, w_{\alpha}, r}G)$.
\end{Def}

We note that if $x\in A\rtimes_{\alpha, w_{\alpha}, r}G$, then $x_t =E^A (x\delta^* )$ is the usual
Fourier coefficient of $x$ at $t\in G$ and  that if $x\in M(A)$, then
$$
x_t = \begin{cases} x & \text{if $t=e$} \\
0 & \text{if $t\ne e$}\end{cases} .
$$

\begin{lemma}\label{lem:pre6} Let $x\in A\rtimes_{\alpha, w_{\alpha}, r}G$. 
If $x_t =0$
for all $t\in G$, then $x=0$.
\end{lemma}
\begin{proof} For any $a\in A$, $t\in G$,
$$
0=E^A (x\delta_t^* )\alpha_t(a)=E^A (x\delta_t^* \alpha_t (a))=E^A (xa\delta_t^* )
$$
since $\alpha_t (a)=\delta_t a\delta_t^*$.
Since $A\rtimes_{\alpha, w_{\alpha},  r}G$ is the closed linear span of
$\{a\delta_t^* \, | \, a\in A\, , \, t\in G \}$, $E^A (xy)=0$
for all $y\in A\rtimes_{\alpha, w_{\alpha}, r}G$. Since $E^A $ is faithful, $x=0$.
\end{proof}

\begin{lemma}\label{lem:pre7} Let $x\in M(A\rtimes_{\alpha, w_{\alpha}, r}G)$. If
$x_t =0$ for all $t\in G$, then $x=0$.
\end{lemma}
\begin{proof} For any $a\in A$, $t\in G$,
$$
0=x_t \alpha_t (a)=\lim_i E^A (x(u_i \delta_t^* ))\alpha_t (a)=\lim_i E^A (x(u_i a )\delta_t^* )
=E^A (xa\delta_t^* ) .
$$
since $\alpha_t (a)=\delta_t a\delta_t^*$. Since $E^A (xa\delta_t^* )$ is the Fourier coefficient of
the element $xa$ at
$t\in G$, by Lemma \ref{lem:pre6}, $xa=0$ for any $a\in A$. Thus for any $a\in A$, $t\in G$,
$xa\delta_t =0$. Since $A\rtimes_{\alpha, w_{\alpha},  r}G$ is the closed linear span of
$\{a\delta_t \, | \, a\in A\, , \, t\in G \}$, $xy=0$ for any $y\in A\rtimes_{\alpha, w_{\alpha}, r}G$.
We note that by \cite [Proposition 3.12.3]{Pedersen:auto},
$x$ can be regarded as a double centralizer on $A\rtimes_{\alpha, w_{\alpha}, r}G$
by multiplication. Hence $x=0$.
\end{proof}

\begin{Def}\label{def:pre2} Let $(\alpha, w_{\alpha})$ and $(\beta, w_{\beta})$ be twisted actions of $G$ on
a $C^*$-algebra $A$. We say that $(\alpha, w_{\alpha})$ and $(\beta, w_{\beta})$ are
\sl
exterior equivalent
\rm
if there are unitary elements $\{w_t \}_{t\in G}\subset M(A)$ satisfying the following:
\newline
(1) $\beta_t =\Ad(w_t )\circ\alpha_t$,
\newline
(2) $w_{ts}=w_{\beta}(t, s)^* w_t \underline{\alpha_t }(w_s )w_{\alpha}(t, s)$
\newline
for any $t, s\in G$.
\end{Def}

We refer to Packer and Raeburn
\cite {PR:twisted} for more details about this notion.

Let $A$ and $B$ be $C^*$algebras. Let $X$ be an $A-B$-equivalence bimodule
(see Gracia-Bond\'{i}a, V\'{a}rilly and Figueroa \cite {GVF:Noncommutative}). Note that
$X$ is often called an $A-B$-imprimitivity bimodule (see e.g. Raeburn and Williams \cite {RW:continuous}).
For any $a\in A$, $b\in B$,
$x\in X$, we denote by $a\cdot x$ the left $A$-action on $X$ and by $x\cdot b$ the right $B$-action on
$X$. Let $\widetilde{X}$ be the dual $B-A$-equivalence bimodule of $X$ and let $\widetilde{x}$ denote
the element in $\widetilde{X}$ associated to an element $x\in X$. Also, we regard $X$ as a Hilbert
$M(A)-M(B)$-bimodule in the sense of Brown, Mingo and Shen \cite {BMS:quasi} as follows:
Let $\BB_B (X)$ be the $C^*$-algebra of all adjointable right $B$-linear operators on $X$.
We recall that an adjointable right $B$-linear operator on $X$ is bounded. Then
$\BB_B (X)$ can be identified with $M(A)$. Similarly let ${}_A \BB(X)$ be the $C^*$-algebra
of all adjointable left $A$-linear operators on $X$. Then ${}_A \BB(X)$ can be
identified with $M(B)$. Using these identifications, we may regard $X$ as a Hilbert $M(A)-M(B)$-bimodule.
Let $\Aut(X)$ be the group of all bijective bounded linear maps on $X$.

\begin{Def}\label{def:pre3} Let $(\alpha, w_{\alpha})$ and $(\beta, w_{\beta})$ be twisted actions of $G$ on
$C^*$-algebra $A$ and $B$, respectively. We say that $(\alpha, w_{\alpha})$ and $(\beta, w_{\beta})$ are
\sl
strongly Morita equivalent
\rm
if there are an $A-B$-equivalence bimodule $X$ and a map $\lambda$ from $G$ to $\Aut (X)$ satisfying
the following:
\newline
(1) $\alpha_t ({}_A \la x, y \ra)={}_A \la \lambda_t (x) \, , \, \lambda_t (y) \ra$,
\newline
(2) $\beta_t (\la x, y \ra_B )=\la \lambda_t (x) \, , \, \lambda_t (y) \ra_B$,
\newline
(3) $(\lambda_t \circ \lambda_s )(x)=w_{\alpha}(t, s)\cdot \lambda_{ts}(x)\cdot w_{\beta}(t, s)^* $
\newline
for any $t, s\in G$, $x, y\in X$, where we regard $X$ as a Hilbert $M(A)-M(B)$-bimodule in the above way.
\end{Def}

We note that if the twisted actions $(\alpha, w_{\alpha})$ and $(\beta, w_{\beta})$ of $G$ on $A$
are exterior equivalent, then
they are strongly Morita equivalent. Indeed,
let $\{w_t \}_{t\in G}$ be unitary elements in $M(A)$ satisfying Conditions (1), (2) in Definition \ref{def:pre2}.
We regard $A$ as the trivial $A-A$-equivalence bimodule in the usual way and we denote it by $X_0$.
For any $t\in G$, let $\lambda_t \in \Aut (X_0 )$ be
defined by
$$
\lambda_t (x)=\alpha_t (x)w_t^*
$$
for all $x\in X_0$. Then one can easily check that $\lambda$ satisfies Conditions (1)-(3) in Definition \ref{def:pre3}.
\par
We recall the definition of strong Morita equivalence for inclusions of $C^*$-algebras.

\begin{Def}\label{def:pre3-2} Two inclusions of $C^*$-algebras $A\subset C$ and $B\subset D$ with
$\overline{AC}=C$
and $\overline{BD}=D$ are said to be
\sl
strongly Morita equivalent
\rm
if there exists a $C-D$-equivalence bimodule $Y$ having a closed subspace $X$ satisfying these as six
conditions:
\newline
(1) $a\cdot x\in X$, ${}_C \la x, y \ra \in A$ for any $a\in A$, $x, y\in X$ and $\overline{{}_C \la X, X \ra}=A$,
$\overline{{}_C \la Y, X \ra}=C$,
\newline
(2) $x\cdot b\in X$, $\la x, y \ra_B \in B$ for any $b\in B$, $x, y\in X$ and $\overline{\la X, X \ra_D} =B$,
$\overline{\la Y, X \ra_D }=D$.
\end{Def}

For each twisted action $(\alpha, w_{\alpha})$ of $G$ on a $C^*$-algebra $A$, we get an inclusion
of $C^*$-algebras $A\subset A\rtimes_{\alpha, w_{\alpha}, r}G$ with
$\overline{A(A\rtimes_{\alpha, w_{\alpha}, r}G)}=A\rtimes_{\alpha, w_{\alpha}, r}G$.
\par
Let  $(\alpha, w_{\alpha})$ and $(\beta, w_{\beta})$ be twisted actions of $G$ on $A$ and $B$, respectively.
We suppose that they are strongly Morita equivalent so that there are an $A-B$-equivalence
bimodule $X$ and a map $\lambda$ from $G$ to $\Aut (X)$ satisfying Conditions (1)-(3)
in Definition \ref{def:pre3}.
We show that the inclusions of $C^*$-algebras $A\subset A\rtimes_{\alpha, w_{\alpha}, r}G$ and
$B\subset B\rtimes_{\beta, w_{\beta}, r}G$ are strongly Morita equivalent in the same way as in Combes
\cite {Combes:morita} or Curto, Muhly and Williams \cite {CMW:equivalence}. Let $L_X$ be the linking
$C^*$-algebra for $X$ defined by
$$
L_X =\begin{bmatrix} A & X \\
\widetilde{X} & B \end{bmatrix}
=\{ \begin{bmatrix} a & x \\
\widetilde{y} & b \end{bmatrix} \, | \, a\in A \, , b\in B, \, x, y\in X \} .
$$
For more details about linking $C^*$-algebras, we refer
to \cite [Section 3.2]{RW:continuous}.
Let $t, s\in G$. Let $\gamma_t $ be the map on $L_X$ defined by
$$
\gamma_t (\begin{bmatrix} a & x \\
\widetilde{y} & b \end{bmatrix})=\begin{bmatrix} \alpha_t (a) & \lambda_t (x) \\
\widetilde{\lambda_t (y)} & \beta_t (b) \end{bmatrix}
$$
for all $\begin{bmatrix} a & x \\
\widetilde{y} & b \end{bmatrix}\in L_X$.
Also, let $w_{\gamma}(t, s)$ be the unitary element in $M(L_X )$ defined by
$$
w_{\gamma}(t, s)=\begin{bmatrix} w_{\alpha} (t, s) & 0 \\
0 & w_{\beta}(t, s) \end{bmatrix}
$$
for all $t, s\in G$.
Then by routine computations, one verifies that $(\gamma, w_{\gamma})$ is a twisted
action of $G$ on $L_X$. Let $L_X \rtimes_{\gamma, w_{\gamma}, r}G$ be the twisted reduced
crossed product of $L_X$ by $(\gamma, w_{\gamma})$. Let $p=\begin{bmatrix} 1 & 0 \\
0 & 0 \end{bmatrix}$ and $q=\begin{bmatrix} 0 & 0 \\
0 & 1 \end{bmatrix}$. Then $p$ and $q$ are projections in
$M(L_X )\subset M(L_X \rtimes_{\gamma, w_{\gamma}, r}G)$ and they are full in
$M(L_X \rtimes_{\gamma, w_{\gamma}, r}G)$. Let $Y=p(L_X \rtimes_{\gamma, w_{\gamma}, r}G)q$
and $X=pL_X q$. Then clearly $X$ is a closed subspace of $Y$ and $Y$ is an
$A\rtimes_{\alpha, w_{\alpha}, r}G-B\rtimes_{\beta, w_{\beta}, r}G$-equivalence bimodule by easy computations,
where $A\rtimes_{\alpha, w_{\alpha}, r}G$ and $B\rtimes_{\beta, w_{\beta}, r}G$ are identified with
$p(L_X \rtimes_{\gamma, w_{\gamma}, r}G)p$ and $q(L_X \rtimes_{\gamma, w_{\gamma}, r}G)q$,
respectively. Furthermore, identifying $A$ and $B$ with $pL_Xp$ and $qL_X q$, respectively,
one readily obtains that the inclusions $A\subset A\rtimes_{\alpha, w_{\alpha}, r}G$ and
$B\subset B\rtimes_{\beta, w_{\beta}, r}G$ are strongly Morita equivalent with respect to
$Y$ and $X$.
Hence we obtain the following proposition.

\begin{prop}\label{prop:pre4} Let $(\alpha, w_{\alpha})$ and $(\beta, w_{\beta})$ be twisted actions of
a countable discrete group $G$
on $C^*$-algebras $A$ and $B$, respectively. If $(\alpha, w_{\alpha})$ and $(\beta, w_{\beta})$ are
strongly Morita equivalent up to some automorphism of $G$, then the inclusions of $C^*$algebras
$A\subset A\rtimes_{\alpha, w_{\alpha}, r}G$
and $B\subset B\rtimes_{\beta, w_{\beta}, r}G$ are strongly Morita equivalent.
\end{prop}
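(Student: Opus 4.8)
The plan is to realize the equivalence through the linking $C^*$-algebra of $X$, in the spirit of Combes \cite{Combes:morita} and Curto, Muhly and Williams \cite{CMW:equivalence}. First I would form $L_X=\begin{bmatrix} A & X \\ \widetilde{X} & B\end{bmatrix}$ and transport the data $(\alpha,\lambda,\beta)$ together with the cocycles $(w_\alpha,w_\beta)$ to a candidate pair $(\gamma,w_\gamma)$ on $L_X$: $\gamma_t$ acts entrywise by $\alpha_t$ on $A$, by $\lambda_t$ on $X$, by $\widetilde{\lambda_t(\cdot)}$ on $\widetilde{X}$, and by $\beta_t$ on $B$, while $w_\gamma(t,s)=\begin{bmatrix} w_\alpha(t,s) & 0 \\ 0 & w_\beta(t,s)\end{bmatrix}\in M(L_X)$. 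The first task is to check that $(\gamma,w_\gamma)$ satisfies (1)--(3) of Definition \ref{def:pre1}. Here Conditions (1)--(3) of Definition \ref{def:pre3} are exactly what is needed: condition (3) forces $(\gamma_t\circ\gamma_s)(x)=w_\alpha(t,s)\cdot\gamma_{ts}(x)\cdot w_\beta(t,s)^*$ on the $X$-corner --- that is, the $\Ad(w_\gamma(t,s))\circ\gamma_{ts}$ relation there (and dually on $\widetilde{X}$) --- while conditions (1) and (2) identify $\BB_B(X)$ and ${}_A\BB(X)$ with the corresponding corners of $M(L_X)$, so the twisted-action identities for $\alpha$ and $\beta$ assemble into the one for $\gamma$. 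Since $w_\gamma$ is block-diagonal, its cocycle identity and normalization follow at once from those for $w_\alpha$ and $w_\beta$.

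Next I would pass to the twisted reduced crossed product $L_X\rtimes_{\gamma,w_\gamma,r}G$ and consider the projections $p=\begin{bmatrix} 1 & 0 \\ 0 & 0\end{bmatrix}$ and $q=\begin{bmatrix} 0 & 0 \\ 0 & 1\end{bmatrix}$ in $M(L_X)\subset M(L_X\rtimes_{\gamma,w_\gamma,r}G)$. Because $X$ is a \emph{full} equivalence bimodule, $p$ and $q$ are full in $L_X$ and hence remain full in $L_X\rtimes_{\gamma,w_\gamma,r}G$. The key structural point is the identification of the corners, $p(L_X\rtimes_{\gamma,w_\gamma,r}G)p\cong A\rtimes_{\alpha,w_\alpha,r}G$ and $q(L_X\rtimes_{\gamma,w_\gamma,r}G)q\cong B\rtimes_{\beta,w_\beta,r}G$, compatibly with the canonical inclusions of $A$ and $B$. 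Concretely this says that a twisted reduced crossed product restricts well to a full corner of the coefficient algebra: one checks that the regular representation defining the reduced norm on $L_X\rtimes_{\gamma,w_\gamma,r}G$ compresses to the regular representations defining the reduced norms on the two smaller crossed products, the twist causing no extra difficulty because $w_\gamma$ restricts to $w_\alpha$ and $w_\beta$ on the corners. I expect this corner identification in the reduced setting --- rather than the bookkeeping for $(\gamma,w_\gamma)$ --- to be the main obstacle, though it is still essentially routine and is exactly what the arguments of \cite{Combes:morita} and \cite{CMW:equivalence} provide.

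Finally, set $Y=p(L_X\rtimes_{\gamma,w_\gamma,r}G)q$. By fullness of $p$ and $q$, $Y$ is an equivalence bimodule between $A\rtimes_{\alpha,w_\alpha,r}G$ and $B\rtimes_{\beta,w_\beta,r}G$. The canonical copy of $L_X$ inside $L_X\rtimes_{\gamma,w_\gamma,r}G$ meets the corner $pL_Xq=X$, giving a closed subspace $X\subset Y$ which is an $A$-$B$-subbimodule; one verifies that the inner products ${}_A\langle\cdot,\cdot\rangle$ and $\langle\cdot,\cdot\rangle_B$ on $X$ take values in $A$ and $B$ and that the module and inner-product structures of $Y$ restrict to those of $X$. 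It follows that $A\subset A\rtimes_{\alpha,w_\alpha,r}G$ and $B\subset B\rtimes_{\beta,w_\beta,r}G$ are strongly Morita equivalent with respect to the pair $(Y,X)$, which is the assertion of the proposition.
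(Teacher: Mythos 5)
Your proposal follows essentially the same route as the paper: form the linking algebra $L_X$, assemble $(\gamma, w_{\gamma})$ from $(\alpha,\lambda,\beta)$ and the block-diagonal cocycle, pass to $L_X\rtimes_{\gamma,w_{\gamma},r}G$, and use the full projections $p,q$ to produce $Y=p(L_X\rtimes_{\gamma,w_{\gamma},r}G)q$ with $X$ as the distinguished closed subspace. The only difference is that you make explicit the corner identifications $p(L_X\rtimes_{\gamma,w_{\gamma},r}G)p\cong A\rtimes_{\alpha,w_{\alpha},r}G$ and its $q$-analogue, which the paper subsumes under ``routine computations.''
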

\begin{proof} Let $\phi$ be an automorphism of $G$. Let $(\alpha^{\phi}, w_{\alpha}^{\phi})$ be 
the twisted action of $G$ on $A$ defined by
$$
\alpha_t^{\phi}(a)=\alpha_{\phi(t)}(a) \, \quad w_{\alpha}^{\phi}(t, s)=w_{\alpha}(\phi(t), \phi(s))
$$
for any $a\in A$, $t, s\in G$. Then there is an isomorphism $\pi$ of $A\rtimes_{\alpha, w_{\alpha}, r}G$ onto
$A\rtimes_{\alpha^{\phi}, w_{\alpha}^{\phi}, r}G$ with $\pi|_{A}=\id$ on $A$.
Thus we may assume that $(\alpha, w_{\alpha})$ and $(\beta, w_{\beta})$ are strongly
Morita equivalent. Hence by the above discussion, we set that $A\subset A\rtimes_{\alpha, w_{\alpha}, r}G$
and $B\subset B\rtimes_{\beta, w_{\beta}, r}G$ are strongly Morita equivalent.
\end{proof}

\section{Relative commutants of inclusions of $\sigma$-unital $C^*$-algebras}\label{sec:com}
Let $A$ be a $C^*$-algebra and $p$ a projection in $M(A\otimes\BK)$.
We note that by \cite [E 1.1.8]{JT:KK}, there is an isomorphism of
$M(p(A\otimes\BK)p)$ onto $pM(A\otimes\BK)p$ such that its restriction to
$p(A\otimes\BK)p$ is the identity map on $p(A\otimes\BK)p$. We identify
$M(p(A\otimes\BK)p)$ with $pM(A\otimes\BK)p$ by the above isomorphism.
We begin this section with the following lemma:

\begin{lemma}\label{lem:com1} Let $A\subset C$ be an inclusion of $C^*$-algebras with $\overline{AC}=C$.
Then the following conditions are equivalent:
\newline
$(1)$ $A' \cap M(C)=\BC1$,
\newline
$(2)$ $(A\otimes\BK)' \cap M(C\otimes\BK)=\BC1$.
\end{lemma}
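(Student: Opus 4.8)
The plan is to establish the sharper fact that
$(A\otimes\BK)'\cap M(C\otimes\BK)=\{m\otimes 1 : m\in A'\cap M(C)\}$,
from which both implications of the lemma follow at once.

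The implication $(2)\Rightarrow(1)$ is the easy direction and needs no hypothesis on $\overline{AC}$: the assignment $m\mapsto m\otimes 1$ is an injective $*$-homomorphism of $M(C)$ into $M(C\otimes\BK)$, and if $m\in A'\cap M(C)$ then $m\otimes 1$ commutes with every $a\otimes k$ ($a\in A$, $k\in\BK$), hence $m\otimes 1\in(A\otimes\BK)'\cap M(C\otimes\BK)$. So if the latter equals $\BC 1$, then $m\otimes 1\in\BC 1$, forcing $m\in\BC 1$.

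For $(1)\Rightarrow(2)$, take $x\in(A\otimes\BK)'\cap M(C\otimes\BK)$. First I would fix an approximate unit $(u_\lambda)$ of $A$ and observe that, since $\overline{AC}=C$ (and hence also $\overline{CA}=C$, by taking adjoints), $(u_\lambda)$ converges strictly to $1$ in $M(C)$; it follows that $u_\lambda\otimes e_{ij}\to 1\otimes e_{ij}$ strictly in $M(C\otimes\BK)$ for all $i,j$. Since $x$ commutes with each $u_\lambda\otimes e_{ij}\in A\otimes\BK$ and multiplication by the fixed multiplier $x$ is strictly continuous, taking strict limits yields $x(1\otimes e_{ij})=(1\otimes e_{ij})x$; thus $x\in(1\otimes\BK)'\cap M(C\otimes\BK)$. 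The central step is then to identify this relative commutant as $M(C)\otimes 1$: realizing $C\otimes\BK$ as the compact operators on the standard right Hilbert $C$-module $H_C=C\otimes\ell^2(\BN)$, with $M(C\otimes\BK)=\BB_C(H_C)$ the adjointable operators, commutation with the projections $1\otimes e_{ii}$ makes $x$ block-diagonal with entries in $\BB_C(C)=M(C)$, and commutation with the off-diagonal $1\otimes e_{ij}$ forces all these entries to coincide with a single $m\in M(C)$, so $x=m\otimes 1$. Finally, $x=m\otimes 1$ commutes with $a\otimes e_{11}$ for every $a\in A$, whence $ma=am$ for all $a\in A$; so $m\in A'\cap M(C)=\BC 1$ and $x\in\BC 1$, which is $(2)$.

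I expect the only genuine (though standard) obstacle to be the structural identification $(1\otimes\BK)'\cap M(C\otimes\BK)=M(C)\otimes 1$, which I would carry out through the Hilbert-module picture of $M(C\otimes\BK)$ indicated above (cf. \cite{JT:KK}); the remaining ingredients — strict convergence of an approximate unit of $A$ to $1$ in $M(C)$ under $\overline{AC}=C$, and stability of the commutation relation under strict limits — are routine.
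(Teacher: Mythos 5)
Your proof is correct and follows essentially the same route as the paper's: both first show that $x$ commutes with $1\otimes\BK$ and then use the matrix units $e_{ij}$ to reduce to the corner $M(C)\otimes e_{11}$, where hypothesis $(1)$ is applied. The only difference is packaging — you establish the sharper identity $(A\otimes\BK)'\cap M(C\otimes\BK)=\{m\otimes 1 : m\in A'\cap M(C)\}$ before invoking $(1)$, whereas the paper invokes $(1)$ in the $(1,1)$ corner and propagates the resulting scalar along the diagonal; your approximate-unit justification of $x(1\otimes e_{ij})=(1\otimes e_{ij})x$ (using $\overline{AC}=C$) is if anything more careful than the paper's brief appeal to strict density of $A$ in $M(A)$.
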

\begin{proof} $(1)\Rightarrow (2)$: Let $x\in (A\otimes\BK)' \cap M(C\otimes\BK)$. Then
for any $a\in A$, $k\in\BK$,
$$
x(a\otimes k)=(a\otimes k)x .
$$
Since $A$ is dense in $M(A)$ under the strict topology, $x(1\otimes k)=(1\otimes k)x$
for any $k\in \BK$.
We note that
\begin{align*}
(1\otimes e_{11})M(C\otimes\BK)(1\otimes e_{11})
& =M((1\otimes {e_{11}})(C\otimes\BK)(1\otimes e_{11})) \\
& =M(C\otimes e_{11})=M(C)\otimes e_{11}.
\end{align*}
For any $a\in A$,
$$
(1\otimes e_{11})x(1\otimes e_{11})(a\otimes e_{11})=(a\otimes e_{11})(1\otimes e_{11})x(1\otimes e_{11}) .
$$
Since $A' \cap M(C)=\BC1$, there is a $c\in \BC$ such that
$(1\otimes e_{11})x(1\otimes e_{11})=c(1\otimes e_{11})$.
Thus $x(1\otimes e_{11})=c(1\otimes e_{11})$. For any $n\in\BN$,
\begin{align*}
x(1\otimes e_{nn}) & =x(1\otimes e_{n1})(1\otimes e_{11})(1\otimes e_{1n}) \\
& =(1\otimes e_{n1})x(1\otimes e_{11})(1\otimes e_{1n}) \\
& =(1\otimes e_{n1})c(1\otimes e_{11})(1\otimes e_{1n}) \\
& =c(1\otimes e_{nn}).
\end{align*}
Hence $x(1\otimes\sum_{i=1}^n e_{ii})=c(1\otimes\sum_{i=1}^n e_{ii})$ for any $n\in\BN$.
Since $1\otimes\sum_{i=1}^n e_{ii}$ is strictly convergent to $1_{M(C\otimes \BK)}$
as $n\to\infty$, $x=c1$. Thus $(A\otimes \BK)' \cap M(C\otimes\BK)=\BC1$.
\newline
$(2)\Rightarrow (1)$: Let $x\in A' \cap M(C)$. Then $x\otimes 1_{M(\BK)}\in M(C\otimes\BK)$.
For any $a\in A$, $k\in \BK$,
$$
(a\otimes k)(x\otimes 1_{M(\BK)})=ax\otimes k =xa\otimes k=(x\otimes 1_{M(\BK)})(a\otimes k) .
$$
Thus $x\otimes 1_{M(\BK)}\in (A\otimes\BK)' \cap M(C\otimes\BK)=\BC1_{M(C\otimes\BK)}$.
Therefore, there is a $c\in \BC$
such that $x\otimes 1_{M(\BK)}=c1_{M(C\otimes\BK)}$.
Hence, $x=c1_{M(C\otimes\BK)}$. Thus
$A' \cap M(C)=\BC1$.
\end{proof}

Let $A$ and $B$ be $\sigma$-unital $C^*$-algebras and $A\subset C$ and $B\subset D$
inclusions of $C^*$-algebras with $\overline{AC}=C$ and $\overline{BD}=D$. Then $C$ and $D$ are
also $\sigma$-unital.
We suppose that $A\subset C$ and $B\subset D$ are strongly Morita equivalent.
Then in the same way as in the proof of \cite [Proposition 3.5]{Kodaka:Picard2} or Brown, Green and
Rieffel \cite[Proposition 3.1]{BGR:linking}, there is an isomorphism $\theta$ of $D\otimes\BK$ onto
$C\otimes\BK$ such that $\theta|_{B\otimes\BK}$ is an isomorphism of $B\otimes\BK$ onto $A\otimes\BK$.
Let $p=\underline{\theta}(1_{M(B)} \otimes e_{11})$. Then $p$ is a projection in
$M(A\otimes\BK)\subset M(C\otimes\BK)$ and $p$ is full in $A\otimes\BK$ and $C\otimes\BK$,
that is,
$$
\overline{(A\otimes\BK)p(A\otimes\BK)}=A\otimes\BK
$$ 
and
$$\overline{(C\otimes\BK)p(C\otimes\BK)}=C\otimes\BK .
$$
By the definitions of $\theta$ and $p$, the inclusion of $C^*$-algebras
$$
(1_{M(B)}\otimes e_{11})(B\otimes\BK)(1_{M(B)}\otimes e_{11})\subset (1_{M(B)}\otimes e_{11})
(D\otimes\BK)(1_{M(B)}\otimes e_{11})
$$
is isomorphic to the inclusion of $C^*$-algebras
$$
p(A\otimes\BK)p\subset p(C\otimes\BK)p
$$
as inclusions of $C^*$-algebras. We will show that if $B' \cap M(D)=\BC1$, then $A' \cap M(C)=\BC1$.

\begin{lemma}\label{lem:com2} With the above notation and assumptions,
$$
(A\otimes\BK)' \cap M(C\otimes\BK)
\cong ((A\otimes\BK)' \cap M(C\otimes\BK))p .
$$
\end{lemma}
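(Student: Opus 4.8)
My plan is to prove that compression by $p$ is the required isomorphism; that is, I would show that the map $\Phi\colon (A\otimes\BK)'\cap M(C\otimes\BK)\to M(C\otimes\BK)$ given by $\Phi(x)=pxp$ is a $*$-isomorphism onto $((A\otimes\BK)'\cap M(C\otimes\BK))p$. Write $N=(A\otimes\BK)'\cap M(C\otimes\BK)$. Recall $p\in M(A\otimes\BK)\subset M(C\otimes\BK)$ (the inclusion coming from $\overline{AC}=C$, equivalently $\overline{(A\otimes\BK)(C\otimes\BK)}=C\otimes\BK$, together with its adjoint $\overline{(C\otimes\BK)(A\otimes\BK)}=C\otimes\BK$), and that $p$ is full in $A\otimes\BK$.

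The first key step is that $p$ commutes with every element of $N$. To see this I would fix an approximate unit $\{u_i\}$ of $A\otimes\BK$ and set $e_i=u_ip\in A\otimes\BK$, a net with $\|e_i\|\le 1$. Using $\overline{(A\otimes\BK)(C\otimes\BK)}=C\otimes\BK=\overline{(C\otimes\BK)(A\otimes\BK)}$ one checks that $u_i\to 1$ strictly in $M(C\otimes\BK)$, and hence $e_i\to p$ strictly in $M(C\otimes\BK)$. For $x\in N$ we have $xe_i=e_ix$ for all $i$ since $e_i\in A\otimes\BK$, and letting $i\to\infty$ (left and right multiplication by the fixed element $x$ being strictly continuous) yields $xp=px$. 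Consequently $Np=pN=pNp$ and, since $p=p^2$, $\Phi$ is a $*$-homomorphism whose image is precisely $Np$.

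It then remains to prove $\Phi$ injective. Suppose $x\in N$ and $xp=0$. Then for $a,b\in A\otimes\BK$ we get $x(apb)=a(xp)b=0$, so $x$ annihilates $\overline{(A\otimes\BK)p(A\otimes\BK)}=A\otimes\BK$ by fullness of $p$; hence $x(A\otimes\BK)=0$, and then $x(C\otimes\BK)=x\,\overline{(A\otimes\BK)(C\otimes\BK)}=0$, so $x=0$ in $M(C\otimes\BK)$. Therefore $\Phi$ is a $*$-isomorphism onto $((A\otimes\BK)'\cap M(C\otimes\BK))p$, which is the assertion. The only technical point is the commutation statement in the second paragraph: one must keep track of the two distinct strict topologies (on $M(A\otimes\BK)$ versus $M(C\otimes\BK)$) and use the nondegeneracy $\overline{AC}=C$ to replace $p$ by a bounded net from $A\otimes\BK$ converging strictly to $p$ inside $M(C\otimes\BK)$; I expect this to be the main obstacle, with everything else routine.
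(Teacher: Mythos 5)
Your proof is correct and follows essentially the same route as the paper: show that $p$ commutes with every element of $N=(A\otimes\BK)'\cap M(C\otimes\BK)$, deduce that compression by $p$ is a $*$-homomorphism onto $Np$, and get injectivity from the fullness of $p$ in $A\otimes\BK$ together with $\overline{(A\otimes\BK)(C\otimes\BK)}=C\otimes\BK$. The only difference is that you justify the commutation $xp=px$ carefully via a bounded net from $A\otimes\BK$ converging strictly to $p$ in $M(C\otimes\BK)$, a point the paper passes over by simply asserting that $x$ commutes with all of $M(A\otimes\BK)$.
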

\begin{proof}
Let $\pi$ be the map from $(A\otimes\BK)' \cap M(C\otimes\BK)$ to
$((A\otimes\BK)' \cap M(C\otimes \BK))p$
defined by $\pi(x)=xp$ for any $x\in (A\otimes\BK)' \cap M(C\otimes\BK)$.
Let $x\in (A\otimes\BK)' \cap M(C\otimes\BK)$.
Then for any $a\in A\otimes\BK$, $xa=ax$. Hence
$xa=ax$ for any $a\in M(A\otimes\BK)$. This implies that $\pi$ is a homomorphism of
$(A\otimes\BK)' \cap M(C\otimes\BK)$ to $((A\otimes\BK)' \cap M(C\otimes\BK))p$. It is clear that
$\pi$ is surjective. We show that $\pi$ is injective. We suppose that $\pi(x)=0$. Then $xp=0$.
For any $y, z\in A\otimes\BK$, $yxpz=0$. Since $x\in (A\otimes\BK)'$, $xypz=0$.
Since $\overline{(A\otimes\BK)p(A\otimes\BK)}=A\otimes\BK$,
$xa=0$ for any $a\in A\otimes\BK$. Let $c$ be any element in $C\otimes\BK$
and let $\{u_i \}$ be an approximate unit of $A\otimes\BK$.
Then $u_i c$ is norm convergent to $c$ since
$$
\overline{(A\otimes\BK)(C\otimes\BK)}=C\otimes\BK .
$$
Thus $xc=\lim_i xu_i c =0$ since $xu_i =0$ for any $i$. Since $x$ can be regarded as a double centralizer on $A$ by
multiplication by \cite [Proposition 3.12.3]{Pedersen:auto}, 
$x=0$. Since $\pi$ is injective, we obtain the conclusion.
\end{proof}

\begin{lemma}\label{lem:com3} With the above notation and assumptions, 
$$
((A\otimes\BK)' \cap M(C\otimes\BK))p\subset (p(A\otimes\BK)p)' \cap pM(C\otimes\BK)p .
$$
\end{lemma}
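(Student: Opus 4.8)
The plan is to take an arbitrary $x\in(A\otimes\BK)'\cap M(C\otimes\BK)$ and verify by hand that $\pi(x)=xp$ satisfies the two defining requirements of membership in $(p(A\otimes\BK)p)'\cap pM(C\otimes\BK)p$: namely that $xp$ lies in $pM(C\otimes\BK)p$, and that $xp$ commutes with every element of $p(A\otimes\BK)p$.

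First I would upgrade the commutation hypothesis from $A\otimes\BK$ to its multiplier algebra. Since $A\otimes\BK$ is strictly dense in $M(A\otimes\BK)$ and multiplication is separately strictly continuous, $x$ commutes with all of $M(A\otimes\BK)$; in particular $xp=px$. Hence $xp=px=pxp\in pM(C\otimes\BK)p$, which, under the identification $M(p(C\otimes\BK)p)\cong pM(C\otimes\BK)p$ recalled at the beginning of Section \ref{sec:com}, is the first required property. (This is also exactly what is needed to know that the homomorphism $\pi$ of Lemma \ref{lem:com2} actually takes values in the algebra appearing on the right-hand side here.)

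Next I would check the commutation with $p(A\otimes\BK)p$ by a direct computation. For $a\in A\otimes\BK$, using $xp=px$, $xa=ax$ and $p^2=p$ repeatedly,
\begin{align*}
(xp)(pap)&=xpap=pxap=paxp=papx,\\
(pap)(xp)&=papxp=papx.
\end{align*}
So $xp$ commutes with every element $pap$ of $p(A\otimes\BK)p$, and therefore $xp\in(p(A\otimes\BK)p)'\cap pM(C\otimes\BK)p$. This gives the asserted inclusion.

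The statement is essentially a bookkeeping exercise, so there is no serious obstacle; the only points requiring a little care are the strict-density step that lets $x$ commute with $p$ (and not merely with $A\otimes\BK$), and keeping the two identifications $M(p(A\otimes\BK)p)\cong pM(A\otimes\BK)p$ and $M(p(C\otimes\BK)p)\cong pM(C\otimes\BK)p$ consistent throughout. Once those are fixed, the inclusion follows from the one-line multiplication check above.
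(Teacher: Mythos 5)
Your argument is correct and is essentially the paper's own proof: the paper likewise notes $xp=px$ (via the strict-density upgrade established in the discussion around Lemma \ref{lem:com2}) and then verifies $xpap=papx$ for $a\in A\otimes\BK$. The only cosmetic difference is that you start from $x\in(A\otimes\BK)'\cap M(C\otimes\BK)$ and form $xp$, whereas the paper takes $x$ already in $((A\otimes\BK)'\cap M(C\otimes\BK))p$; the computation is the same.
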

\begin{proof} Let $x\in (A\otimes\BK)' \cap M(C\otimes\BK)$. We show that
$$
xp\in (p(A\otimes\BK)p)' \cap pM(C\otimes\BK)p .
$$
We note that $xa=ax$ for any $a\in A\otimes\BK$.
Hence $xp=pxp$ since $p\in M(A\otimes\BK)$.
Thus $xp\in pM(C\otimes\BK)p$. Also, for any $a\in A\otimes\BK$,
$xp(pap)=xpap=(pap)xp$. Hence $xp\in(p(A\otimes\BK)p)'$. 
The claim clearly follows.
\end{proof}

\begin{lemma}\label{lem:com4} With the above notation and assumptions, if $B' \cap M(D)=\BC1$, then
$A' \cap M(C)=\BC1$.
\end{lemma}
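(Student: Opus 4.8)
The plan is to transport the hypothesis $B'\cap M(D)=\BC1$ through the isomorphism of inclusions $B\subset D\cong p(A\otimes\BK)p\subset p(C\otimes\BK)p$ recorded before the statement, and then to feed the resulting information into Lemmas \ref{lem:com2} and \ref{lem:com3} in order to pin down $(A\otimes\BK)'\cap M(C\otimes\BK)$; Lemma \ref{lem:com1} then yields the conclusion.

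First I would note the consequence of the hypothesis at the level of the corner. The isomorphism $\theta$ restricts to an isomorphism of the inclusion $B\subset D$ onto the inclusion $p(A\otimes\BK)p\subset p(C\otimes\BK)p$, and every isomorphism of $C^*$-algebras extends uniquely and strictly continuously to an isomorphism of the multiplier algebras; hence $B'\cap M(D)$ is carried onto $(p(A\otimes\BK)p)'\cap M(p(C\otimes\BK)p)$. Using the identification $M(p(C\otimes\BK)p)=pM(C\otimes\BK)p$ made at the beginning of this section (with $C$ in place of $A$), and recalling that $p$ is the unit of the corner $p(C\otimes\BK)p$, the hypothesis $B'\cap M(D)=\BC1$ becomes
$$
(p(A\otimes\BK)p)'\cap pM(C\otimes\BK)p=\BC p .
$$

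Next I would combine this with the two preceding lemmas. By Lemma \ref{lem:com3},
$$
((A\otimes\BK)'\cap M(C\otimes\BK))p\subseteq(p(A\otimes\BK)p)'\cap pM(C\otimes\BK)p=\BC p ,
$$
so the $C^*$-algebra $((A\otimes\BK)'\cap M(C\otimes\BK))p$ has dimension at most one. By Lemma \ref{lem:com2} the map $x\mapsto xp$ is an isomorphism of $(A\otimes\BK)'\cap M(C\otimes\BK)$ onto $((A\otimes\BK)'\cap M(C\otimes\BK))p$, so $(A\otimes\BK)'\cap M(C\otimes\BK)$ has dimension at most one as well. Since it contains $1_{M(C\otimes\BK)}$, it equals $\BC1$. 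Finally, applying Lemma \ref{lem:com1} to the inclusion $A\subset C$ gives $A'\cap M(C)=\BC1$.

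All of this is bookkeeping, and I expect the only delicate point — the one place where a hidden hypothesis could bite — to be the compatibility of the three identifications of multiplier algebras: namely that the canonical multiplier extension of the corner isomorphism $p(A\otimes\BK)p\subset p(C\otimes\BK)p\cong B\subset D$ agrees, under $M(p(C\otimes\BK)p)=pM(C\otimes\BK)p$, with $\underline{\theta}$ restricted to the appropriate corner of $M(C\otimes\BK)$. This should follow from strict continuity of $\underline{\theta}$ together with the strict density of $p(C\otimes\BK)p$ in $pM(C\otimes\BK)p$, but I would verify it explicitly rather than leave it implicit.
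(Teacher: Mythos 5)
Your proof is correct and follows essentially the same route as the paper: transport the hypothesis to the corner $p(A\otimes\BK)p\subset p(C\otimes\BK)p$, apply Lemma \ref{lem:com3} and then Lemma \ref{lem:com2} to identify $(A\otimes\BK)'\cap M(C\otimes\BK)$ with $\BC 1$, and finish with Lemma \ref{lem:com1}. Your extra care about why the inclusion from Lemma \ref{lem:com3} forces equality (the relative commutant contains $1$, so its image under $x\mapsto xp$ contains $p$) and about the compatibility of the multiplier-algebra identifications is a welcome tightening of details the paper leaves implicit, but it is not a different argument.
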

\begin{proof} By the discussion before Lemma \ref{lem:com2},
$$
(1_{M(B)}\otimes e_{11})(B\otimes\BK)(1_{M(B)}\otimes e_{11})
\subset (1_{M(B)}\otimes e_{11})(D\otimes\BK)(1_{M(B)}\otimes e_{11})
$$
is isomorphic to
$(p(A\otimes\BK)p\subset p(C\otimes\BK)p$ as inclusions of $C^*$-algebras. 
Also,
$$
(1_{M(B)}\otimes e_{11})(B\otimes\BK)(1_{M(B)}\otimes e_{11})
\subset (1_{M(B)}\otimes e_{11})(D\otimes\BK)(1_{M(B)}\otimes e_{11})
$$
is isomorphic to
$B\subset D$ as inclusions of $C^*$-algebras. Hence since $B' \cap D=\BC 1$,
$(p(A\otimes\BK)p)' \cap p(C\otimes\BK)p=\BC p$.
Thus by Lemma \ref {lem:com3}
$((A\otimes\BK)' \cap M(C\otimes\BK))p=\BC p$.
It follows that $(A\otimes\BK)' \cap M(C\otimes\BK)=\BC 1$ by Lemma \ref {lem:com2}. 
Therefore, by Lemma \ref {lem:com1}, $A' \cap M(C)=\BC1$.
\end{proof}

\section{Free twisted actions of a countable discrete group}\label{sec:free}
We recall the following definitions (see \cite {Choda:free, Zarikian:expectation} and references therein).
Let $\alpha$ be an automorphism of a $C^*$-algebra $A$. 

\begin{Def}\label{def:free0} We say that $\alpha$ is {\sl inner} if there is a unitary
element $w\in M(A)$ such that $\alpha=\Ad(w)$. We say that $\alpha$ is {\sl outer} if $\alpha$ is not
inner.
\end{Def}

\begin{Def}\label{def:free1} We say that $\alpha$ is {\sl free} if $\alpha$ satisfies the following condition:
\newline
If $x\in M(A)$ satisfies that $xa=\alpha(a)x$ for all $a\in A$, then $x=0$.
\end{Def}

Let $(\alpha, w_{\alpha})$ be a twisted action of $G$ on a $C^*$-algebra $A$.

\begin{Def}\label{def:free3} We say that $(\alpha, w_{\alpha})$ is {\sl outer} if $\alpha_t$ is outer
for every $t\in G\setminus\{e\}$.
\end{Def}

\begin{Def}\label{def:free2} We say that $(\alpha, w_{\alpha})$ is {\sl free}
if the automorphism $\alpha_t$ is free for
every $t\in G\setminus\{e\}$.
\end{Def}

\begin{lemma}\label{lem:free9} Let $(\alpha, w_{\alpha})$ be a twisted action of $G$ on
a $C^*$-algebra $A$.
If $(\alpha, w_{\alpha})$ is free, then $(\alpha, w_{\alpha})$ is outer.
Furthermore, if $A' \cap M(A)=\BC1$, then the converse holds.
\end{lemma}
\begin{proof} We suppose first that $(\alpha, w_{\alpha})$ is free and that
$(\alpha, w_{\alpha})$ is not outer.
Then there are an element $t\in G\setminus\{e\}$ and a unitary element $w\in M(A)$ such that
$\alpha_t =\Ad(w)$. Thus for any $a\in A$, $\alpha_t (a)=waw^*$. Hence $wa=\alpha_t (a)w$
for all $a\in A$. Since $(\alpha, w_{\alpha})$ is free, $w=0$. This gives a contradiction.
Therefore, $(\alpha, w_{\alpha})$ is outer.
Next, we suppose that $A' \cap M(A)=\BC1$ and that $(\alpha, w_{\alpha})$ is outer.
Assume that $(\alpha, w_{\alpha})$ is not
free. Then there are an element $t\in G\setminus\{e\}$ and a non-zero element
$x\in M(A)$ such that $xa=\alpha_t (a)x$ for all $a\in A$. Hence $ax^* =x^* \alpha_t (a)$
for any $a\in A$. Thus $x^* xa=x^* \alpha_t (a)x=ax^* x$ for any $a\in A$. Hence $x^* x\in \BC1$.
Also, $\alpha_t (a)xx^* =xax^* =xx^* \alpha_t (a)$ for any $a\in A$. Thus $xx^* \in \BC1$.
A computation gives $(x^* x-xx^* )^* (x^* x-xx^*)=0$, i.e., $x$ is normal, so we get that
there exists some positive $c\in \BR$ such that $x^* x=xx^* =c1$.
Let $w=\frac{1}{\sqrt{c}}x$. Then $w$ is a unitary element in $M(A)$
such that $\alpha_t (a)=waw^*$ for all $a\in A$. This gives a contradiction. Therefore, $(\alpha, w_{\alpha})$ is
free.
\end{proof}

Let $(\alpha, w_{\alpha})$ be a twisted action of $G$ on a unital $C^*$algebra $A$.
Let $E^A$ be the faithful canonical conditional expectation from $A\rtimes_{\alpha, w_{\alpha}, r}G$
onto $A$ defined in Section \ref {sec:pre}. In the same way as
Zarikian \cite [Theorem 3.1.2]{Zarikian:expectation}, we obtain the following proposition.

\begin{prop}\label{prop:free4} With the above assumption,
the following conditions are equivalent:
\newline
$(1)$ The conditional expectation $E^A$ is unique,
\newline
$(2)$ $A' \cap (A\rtimes_{\alpha, w_{\alpha}, r}G)=A' \cap A$,
\newline
$(3)$ the twisted action $(\alpha, w_{\alpha})$ is free.
\end{prop}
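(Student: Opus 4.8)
The plan is to prove Proposition~\ref{prop:free4} by establishing the cycle of implications $(3)\Rightarrow(1)\Rightarrow(2)\Rightarrow(3)$, which follows the strategy of Zarikian \cite{Zarikian:expectation} but must be adapted to the twisted setting and to the conditional expectation $E^A$ constructed in Section~\ref{sec:pre} via Fourier coefficients.

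First I would prove $(3)\Rightarrow(1)$. Let $F$ be any conditional expectation from $A\rtimes_{\alpha, w_{\alpha}, r}G$ onto $A$. The idea is to show that $F$ must annihilate $a\delta_t$ for every $a\in A$ and every $t\in G\setminus\{e\}$, so that $F=E^A$ on the dense span of $\{a\delta_t\}$. Fix $t\ne e$ and $a\in A$. For any $b\in A$ one has, using $\delta_t b = \alpha_t(b)\delta_t$ and the module property of $F$,
$$
b\, F(a\delta_t) = F(ba\delta_t) = F(a\delta_t \cdot \alpha_t^{-1}(b)) = F(a\delta_t)\,\alpha_t^{-1}(b).
$$
Replacing $b$ by $\alpha_t(b)$ gives $\alpha_t(b)\,F(a\delta_t) = F(a\delta_t)\,b$ for all $b\in A$, i.e. the element $x = F(a\delta_t)^* \in M(A)$ satisfies $x b = \alpha_t(b) x$ for all $b\in A$ (after taking adjoints and using that $\alpha_t$ is a $*$-automorphism; one checks the bookkeeping with $\alpha_t^{-1}=\Ad(w_\alpha(t^{-1},t)^*)\circ\alpha_{t^{-1}}$ from the preliminaries). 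Freeness of $\alpha_t$ then forces $F(a\delta_t)=0$. Hence $F = E^A$ on the closed linear span of $\{a\delta_t : a\in A,\ t\in G\}$, which is all of $A\rtimes_{\alpha, w_{\alpha}, r}G$, proving uniqueness.

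Next, $(1)\Rightarrow(2)$: the inclusion $A'\cap A \subseteq A'\cap(A\rtimes_{\alpha, w_{\alpha}, r}G)$ is trivial, so suppose $z\in A'\cap(A\rtimes_{\alpha, w_{\alpha}, r}G)$ but $z\notin A$. Following Zarikian, I would use $z$ to build a second conditional expectation distinct from $E^A$, contradicting $(1)$. Concretely, since $z$ commutes with $A$ and $\|E^A\|\le 1$, one can perturb $E^A$ by a map of the form $c\mapsto E^A(zc)$ (suitably normalized) or, more robustly, pass to a positive element of $A'\cap(A\rtimes_{\alpha, w_{\alpha}, r}G)$ not in $A$ and average; the contrapositive reformulation is that if the expectation is unique then no such $z$ exists. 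I expect this to require a short argument that $A'\cap(A\rtimes_{\alpha, w_{\alpha}, r}G)$, being a $C^*$-algebra containing $A'\cap A$, admits enough room to produce a genuinely different expectation unless it collapses to $A'\cap A$.

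Finally, $(2)\Rightarrow(3)$: suppose $(\alpha,w_\alpha)$ is not free, so there exist $t\ne e$ and a nonzero $x\in M(A)$ with $xa = \alpha_t(a)x$ for all $a\in A$. Then I would check that $x\delta_t \in M(A\rtimes_{\alpha, w_{\alpha}, r}G)$ commutes with $A$: for $a\in A$, $a(x\delta_t) = (ax)\delta_t = (x\alpha_t^{-1}(a))\delta_t = x\delta_t \alpha_t(\alpha_t^{-1}(a))\cdots$, which reduces to $(x\delta_t)a$ after accounting for the cocycle $w_\alpha$ exactly as in the verification that $A\subset A\rtimes_{\alpha,w_\alpha,r}G$. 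One then shows $x\delta_t$ can be taken to lie in $A\rtimes_{\alpha, w_{\alpha}, r}G$ itself (using a $\sigma$-unital approximate unit, or by first multiplying by an element of $A$), and that it is not in $A$ because its Fourier coefficient at $t\ne e$ is nonzero while $E^A(x\delta_t \delta_s)=0$ distinguishes it from elements of $A$ via Lemma~\ref{lem:pre6}. This contradicts $(2)$. The main obstacle throughout is the twisting: the relations $\delta_t b = \alpha_t(b)\delta_t$ and $\delta_t\delta_s = w_\alpha(t,s)\delta_{ts}$ mean that every commutation computation carries cocycle factors, so the delicate point is checking that these factors are unitaries in $M(A)$ and cancel correctly — in particular that the element witnessing non-freeness genuinely produces an element of the relative commutant $A'\cap(A\rtimes_{\alpha,w_\alpha,r}G)$ and not merely of the multiplier-level commutant $A'\cap M(A\rtimes_{\alpha,w_\alpha,r}G)$.
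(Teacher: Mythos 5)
Your overall strategy (a Zarikian-style cycle of implications) is the same as the paper's, but two of your three steps have real problems. The serious gap is $(1)\Rightarrow(2)$, which is the substantive implication and the one you essentially punt on. The map $c\mapsto E^A(zc)$, however normalized, is not a conditional expectation (it is neither positive nor idempotent in general), and "admits enough room to produce a genuinely different expectation" is not an argument. The paper's proof (following Zarikian) is concrete: take $x=x^*\in A'\cap(A\rtimes_{\alpha,w_\alpha,r}G)$ with $\|x\|<1$, note $E^A(x)\in A'\cap A$, and define
$$F(a)=E^A\bigl((1-x)^{1/2}a(1-x)^{1/2}\bigr)\bigl(1-E^A(x)\bigr)^{-1},$$
which one checks is a conditional expectation onto $A$ precisely because $(1-x)^{1/2}$ commutes with $A$ and $1-E^A(x)$ is invertible in $A'\cap A$. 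Uniqueness forces $F=E^A$, evaluating at $x$ gives $E^A(x)^2=E^A(x^2)$, so $x$ lies in the multiplicative domain of $E^A$, and faithfulness of $E^A$ then yields $E^A\bigl((x-E^A(x))^2\bigr)=0$, hence $x=E^A(x)\in A$. Without this (or an equivalent) construction your proof of the key implication is missing.

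Secondly, your computation in $(3)\Rightarrow(1)$ contains a false identity: $ba\delta_t\ne a\delta_t\cdot\alpha_t^{-1}(b)$ in general, since the right-hand side equals $ab\delta_t$; so the chain $bF(a\delta_t)=F(ba\delta_t)=F(a\delta_t)\alpha_t^{-1}(b)$ breaks down unless $a$ and $b$ commute. The proposition is stated for \emph{unital} $A$ (this is part of "the above notation"), so the fix is simply to apply your argument to $F(\delta_t)$ itself: $bF(\delta_t)=F(b\delta_t)=F(\delta_t\alpha_t^{-1}(b))=F(\delta_t)\alpha_t^{-1}(b)$, whence $F(\delta_t)b=\alpha_t(b)F(\delta_t)$ and freeness gives $F(\delta_t)=0$, then $F(a\delta_t)=aF(\delta_t)=0$. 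Unitality also disposes of your worries in $(2)\Rightarrow(3)$ about forcing the witness into $A\rtimes_{\alpha,w_\alpha,r}G$ (multiplying by an element of $A$ would in any case destroy the commutation relation). Finally, in $(2)\Rightarrow(3)$ the element $x\delta_t$ does \emph{not} commute with $A$: from $xa=\alpha_t(a)x$ one gets $a(x\delta_t)=x\alpha_t^{-1}(a)\delta_t$, which is not $(x\delta_t)a$; the correct element of $A'\cap(A\rtimes_{\alpha,w_\alpha,r}G)$ is $\delta_t^*x$ (equivalently $x^*\delta_t$), after which condition $(2)$ and $E^A(\delta_t^*x)=0$ give $\delta_t^*x=0$, hence $x=0$, as in the paper.
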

\begin{proof} $(1)\Rightarrow(2)$: This is immediate by \cite [Lemma 3.1.1]{Zarikian:expectation}.
\newline
$(2)\Rightarrow (3)$: Let $t\in G\setminus\{e\}$ and $x\in A$. We suppose that $xa=\alpha_t (a)x$
for all $a\in A$. Then since $\alpha_t (a)=\delta_t a\delta_t^*$, $\delta_t^* xa=a\delta_t^* x$ for all
$a\in A$. Hence $\delta_t^* x\in A' \cap(A\rtimes_{\alpha, w_{\alpha}, r}G)$.
By the definition of $E^A$, $E^A (\delta_t^* x)=0$. On the other hand, since $A' \cap
(A\rtimes_{\alpha, w_{\alpha}, r}G)=A' \cap A$, $E^A (\delta_t^* x)=\delta_t^* x$.
Hence $x=0$.
\newline
$(3)\Rightarrow (1)$: We suppose that $(\alpha, w_{\alpha})$ is free. Let $F$ be a conditional
expectation from $A\rtimes_{\alpha, w_{\alpha}, r}G$ onto $A$. Let $t\in G\setminus\{e\}$.
For any $a\in A$,
$$
F(\delta_t )a=F(\delta_t a)=F(\alpha_t (a)\delta_t )=\alpha_t (a)F(\delta_t )
$$
since $\alpha_t (a) =\delta_t a\delta_t^*$. Since $(\alpha, w_{\alpha})$ is free, $F(\delta_t )=0$.
Hence $F(a\delta_t )=aF(\delta_t )=0=E^A (a\delta_t )$ for all $a\in A$, $t\in G\setminus\{e\}$, and
it follows that $F=E^A$.
\end{proof}

\begin{cor}\label{cor:free5} Let $(\alpha, w_{\alpha})$ be a twisted action of $G$ on a $C^*$-algebra
$A$. Then the following conditions are equivalent:
\newline
$(1)$ $A' \cap M(A\rtimes_{\alpha, w_{\alpha}, r}G)=\BC1$,
\newline
$(2)$ $A' \cap M(A)=\BC1$ and $(\alpha, w_{\alpha})$ is free,
\newline
$(3)$ $A' \cap M(A)=\BC1$ and $(\alpha, w_{\alpha})$ is outer.
\end{cor}
\begin{proof} $(1)\Rightarrow (2)$: It is clear that $A' \cap M(A)=\BC1$. We show that
$(\alpha, w_{\alpha})$ is free. Since $M(A)\rtimes_{\underline{\alpha}, w_{\alpha}, r}G\subset
M(A\rtimes_{\alpha, w_{\alpha}, r}G)$, 
$$
M(A)' \cap(M(A)\rtimes_{\underline{\alpha}, w_{\alpha}, r}G)
\subset A' \cap M(A\rtimes_{\alpha, w_{\alpha}, r}G)=\BC1 .
$$
Hence by Proposition \ref {prop:free4}, $(\underline{\alpha}, w_{\alpha})$ is free. Let $t\in G\setminus\{e\}$
and $x\in M(A)$. We suppose that $xa=\alpha_t (a)x$ for all $a\in A$. Then since $\underline{\alpha_t}$
is strictly continuous, $xa=\underline{\alpha_t}(a)x$ for all $a\in M(A)$. Hence
$x=0$ since $(\underline{\alpha}, w_{\alpha})$ is free. Thus $(\alpha, w_{\alpha})$ is free.
\newline
$(2)\Rightarrow (1)$: Let $x\in A' \cap M(A\rtimes_{\alpha, w_{\alpha}, r}G)$. For any $t\in G$
let $x_t$ be the Fourier coefficient of $x$ at $t\in G\setminus \{e\}$ defined in
Section \ref{sec:pre}. Then for any $a\in A$,
\begin{align*}
x_t a & =\lim_i E^A (xu_i \delta_t^* )a=\lim_i E^A (xu_i \delta_t^* a) 
=\lim_i E^A (xu_i \alpha_t^{-1} (a)\delta_t^* ) \\
& =E^A (x\alpha_t^{-1} (a)\delta_t^* )=E^A (\alpha_t^{-1} (a)x\delta_t^* )=\lim_i  E^A (\alpha_t^{-1} (a)xu_i \delta_t^* ) \\
& =\alpha_t^{-1} (a)\lim_i E^A (xu_i \delta_t^* )=\alpha_t^{-1} (a)x_t .
\end{align*}
Since $(\alpha, w_{\alpha})$ is free $x_t =0$ for every $t\in G\setminus\{e\}$.
Thus, setting $y=x-x_e \in M(A\rtimes_{\alpha, w_{\alpha}, r}G)$,
we get that $y_t =0$ for all $t\in G$. Lemma \ref{lem:pre7} gives that $y=0$.
Hence, $x=x_e \in A' \cap M(A)=\BC 1$. This shows that (1) holds.
\newline
The equivalence $(2)\Leftrightarrow(3)$ follows from \ref{lem:free9}.
\end{proof}

\begin{lemma}\label{lem:free8} Let $(\alpha, w_{\alpha})$ be a free twisted action of $G$ on
a $C^*$-algebra $A$. Then $(\alpha\otimes\id, w_{\alpha}\otimes 1)$ is a free twisted action of
$G$ on $A\otimes\BK$.
\end{lemma}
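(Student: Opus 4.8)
The plan is to fix $t\in G\setminus\{e\}$ and to show directly that the automorphism $\alpha_t\otimes\id_{\BK}$ of $A\otimes\BK$ is free in the sense of Definition \ref{def:free1}; together with the fact, already noted in Section \ref{sec:pre}, that $(\alpha\otimes\id_{\BK},\,w_{\alpha}\otimes 1_{M(\BK)})$ satisfies the axioms of a twisted action, this is exactly what Definition \ref{def:free2} requires. So I would start from an $x\in M(A\otimes\BK)$ with $x(a\otimes k)=(\alpha_t(a)\otimes k)x$ for all $a\in A$, $k\in\BK$, and aim to prove $x=0$.

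First I would pass to ``matrix entries''. By the computation in the proof of Lemma \ref{lem:com1} one has $(1\otimes e_{11})M(A\otimes\BK)(1\otimes e_{11})=M(A)\otimes e_{11}$, so for each $i,j\in\BN$ there is a unique $x_{ij}\in M(A)$ with $(1\otimes e_{1i})\,x\,(1\otimes e_{j1})=x_{ij}\otimes e_{11}$. Next I would compress the intertwining identity, applied to the element $a\otimes e_{kl}$, by $1\otimes e_{1i}$ on the left and $1\otimes e_{m1}$ on the right. Using $e_{pq}e_{rs}=\delta_{qr}e_{ps}$ together with $a\otimes e_{k1}=(1\otimes e_{k1})(a\otimes e_{11})$ and $\alpha_t(a)\otimes e_{1l}=(\alpha_t(a)\otimes e_{11})(1\otimes e_{1l})$, this should yield, after cancelling $\otimes e_{11}$, the relation $\delta_{lm}\,x_{ik}\,a=\delta_{ik}\,\alpha_t(a)\,x_{lm}$ in $M(A)$, valid for all $i,k,l,m\in\BN$ and all $a\in A$.

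From this identity the two cases read off at once. If $i\neq j$, put $k=j$ and $l=m=1$: then $x_{ij}a=0$ for every $a\in A$, and since the only element of $M(A)$ annihilating $A$ is $0$, we get $x_{ij}=0$. If $i=j$, put $k=l=m=i$: then $x_{ii}a=\alpha_t(a)x_{ii}$ for all $a\in A$, and since $(\alpha,w_{\alpha})$ is free the automorphism $\alpha_t$ is free (Definition \ref{def:free2}), so $x_{ii}=0$. Hence $x_{ij}=0$ for all $i,j$, and therefore $(1\otimes e_{ii})\,x\,(1\otimes e_{jj})=(1\otimes e_{i1})(x_{ij}\otimes e_{11})(1\otimes e_{1j})=x_{ij}\otimes e_{ij}=0$ for all $i,j\in\BN$.

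Finally, setting $p_N=\sum_{i=1}^N 1\otimes e_{ii}$, the sequence $\{\sum_{i=1}^N e_{ii}\}_N$ is an approximate unit of $\BK$, so the projections $p_N$ converge strictly to $1$ in $M(A\otimes\BK)$ and hence $p_Nxp_N\to x$ strictly; but $p_Nxp_N=\sum_{i,j\leq N}(1\otimes e_{ii})\,x\,(1\otimes e_{jj})=0$, whence $x=0$. This shows $\alpha_t\otimes\id_{\BK}$ is free for every $t\in G\setminus\{e\}$, i.e. $(\alpha\otimes\id,\,w_{\alpha}\otimes 1)$ is a free twisted action. The argument is essentially routine; the only places needing a little care are the index bookkeeping in the matrix-unit compression that produces the displayed relation $\delta_{lm}x_{ik}a=\delta_{ik}\alpha_t(a)x_{lm}$, the use of the essentiality of $A$ in $M(A)$ to kill the off-diagonal $x_{ij}$, and the strict-convergence step that assembles these into $x=0$.
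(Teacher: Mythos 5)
Your proof is correct and follows essentially the same route as the paper's: both compress $x$ by the matrix units $1\otimes e_{ij}$, identify the corner $(1\otimes e_{ii})(A\otimes\BK)(1\otimes e_{ii})$ with $A$, invoke freeness of $\alpha_t$ there to kill the diagonal, and then reassemble $x=0$ via the strict convergence of $\sum_{i\leq N}1\otimes e_{ii}$ to $1$. The only (harmless) difference is that the paper disposes of the off-diagonal part by first extending the intertwining relation to $M(A)\otimes\BK$ and concluding that $x$ commutes with each $1\otimes e_{ii}$, whereas you compute the entries $x_{ij}$ explicitly and use that $x_{ij}A=0$ forces $x_{ij}=0$.
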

\begin{proof} Let $t\in G\setminus\{e\}$.
Let $x$ be an element in $M(A\otimes\BK)$ satisfying that
$$
xy=(\alpha_t \otimes\id)(y)x
$$
for all $y\in A\otimes\BK$. This implies that
$$
xy=(\underline{\alpha_t}\otimes\id)(y)x
$$
for all $y\in M(A)\otimes\BK$. Hence we get that
$$
x(1\otimes e_{ii})=(1\otimes e_{ii})x
$$
for all $i\in \BN$. Also, we have that
$$
x(a\otimes e_{ii})=(\alpha_t (a)\otimes e_{ii})x
$$
for any $a\in A$, $i\in \BN$. Hence
$$
(1\otimes e_{ii})x(1\otimes e_{ii})(a\otimes e_{ii})
=(\alpha_t (a)\otimes e_{ii})(1\otimes e_{ii})x(1\otimes e_{ii}) 
$$
for all $a\in A$, $i\in \BN$. Since $(\alpha, w_{\alpha})$ is a free twisted action on $A$ and 
we can identify $A$ with
$(1\otimes e_{ii})(A\otimes \BK)(1\otimes e_{ii})$,
$$
(1\otimes e_{ii})x(1\otimes e_{ii})=0
$$
for all $i\in \BN$. Thus, since $x(1\otimes e_{ii})=0$ for any $i\in \BN$, $x=0$.
\end{proof}

\section{Strong Morita equivalence for inclusions of $C^*$-algebras}\label{sec:inclusion}
Let $(\alpha, w_{\alpha})$ and $(\beta, w_{\beta})$ be twisted actions of $G$ on
$\sigma$-unital $C^*$-algebras $A$ and $B$
and let $A\rtimes_{\alpha, w_{\alpha}, r}G$ and $B\rtimes_{\beta, w_{\beta}, r}G$ denote
the twisted reduced crossed products of $A$
and $B$ by $(\alpha, w_{\alpha})$ and $(\beta, w_{\beta})$, respectively. Then we obtain the 
inclusions of $C^*$-algebras $A\subset A\rtimes_{\alpha, w_{\alpha}, r}G$ and
$B\subset B\rtimes_{\beta, w_{\beta}, r}G$.
\par
We suppose that $A\subset A\rtimes_{\alpha, w_{\alpha}, r}G$ and
$B\subset B\rtimes_{\beta, w_{\beta}, r}G$ are
strongly Morita equivalent. We regard $\BK$ as the trivial $\BK-\BK$-equivalence bimodule.
Then the inclusions of $C^*$algebras $A\otimes\BK\subset (A\rtimes_{\alpha, w_{\alpha}, r}G)\otimes\BK$
and $B\otimes\BK\subset (B\rtimes_{\beta, w_{\beta}, r}G)\otimes\BK$ are also strongly Morita equivalent.
Hence in the same way as in the proof of \cite [Proposition 3.5]{Kodaka:Picard2} or
Brown, Green and Rieffel
\cite [Proposition 3.1]{BGR:linking}, there is an isomorphism $\theta$ of
$(A\rtimes_{\alpha, w_{\alpha}, r}G)\otimes\BK$ onto
$(B\rtimes_{\beta, w_{\beta}, r}G)\otimes\BK$ such that $\theta|_{A\otimes \BK}$ is an isomorphism of
$A\otimes\BK$ onto $B\otimes\BK$. Also, as explained in section \ref{sec:pre},
the inclusion of $C^*$-algebras
$$
A\otimes\BK\subset (A\rtimes_{\alpha, w_{\alpha}, r}G)\otimes\BK
$$
is isomorphic to
$$
A\otimes \BK\subset (A\otimes\BK)\rtimes_{\alpha\otimes\id, w_{\alpha}\otimes 1, r}G
$$
as inclusions of $C^*$-algebras, and
$$
B\otimes\BK\subset (B\rtimes_{\beta, w_{\beta}, r}G)\otimes \BK
$$
is isomorphic to
$$
B\otimes\BK\subset (B\otimes\BK)\rtimes_{\beta\otimes\id, w_{\beta}\otimes 1, r}G
$$
as inclusions of $C^*$-algebras. 
Thus $\theta$ can be regarded as an isomorphism of
$(A\otimes\BK)\rtimes_{\alpha\otimes\id, w_{\alpha}\otimes 1, r}G$ onto
$(B\otimes\BK)\rtimes_{\beta\otimes\id, w_{\beta}\otimes 1, r}G$ such that
$\theta|_{A\otimes \BK}$ is an isomorphism of
$A\otimes\BK$ onto $B\otimes\BK$. Set $\theta_r =\theta|_{A\otimes\BK}$,
$\gamma_t =\theta_r \circ(\alpha_t \otimes\id) \circ\theta_r^{-1}$ and
$w_{\gamma}(t, s)=\underline{\theta_r}(w_{\alpha}(t, s)\otimes 1)$ for all $t, s\in G$.

\begin{lemma}\label{lem:strong0} With the above notation and assumptions,
$(\alpha\otimes\id, w_{\alpha} \otimes 1)$ and
$(\gamma, w_{\gamma})$ are strongly Morita equivalent.
\end{lemma}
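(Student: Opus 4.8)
The plan is to produce an explicit equivalence bimodule together with a map $\lambda$ realizing the strong Morita equivalence; since $(\gamma, w_{\gamma})$ is, by construction, obtained from $(\alpha\otimes\id, w_{\alpha}\otimes 1)$ by transporting along the isomorphism $\theta_r$ of $A\otimes\BK$ onto $B\otimes\BK$, the whole argument should amount to unwinding the definitions. Concretely, I would take $X=B\otimes\BK$, viewed as an $A\otimes\BK - B\otimes\BK$-equivalence bimodule whose right $B\otimes\BK$-action is multiplication, with right inner product $\la \xi, \eta \ra_{B\otimes\BK}=\xi^* \eta$, whose left $A\otimes\BK$-action is $a\cdot\xi=\theta_r(a)\xi$, and with left inner product ${}_{A\otimes\BK}\la \xi, \eta \ra=\theta_r^{-1}(\xi\eta^* )$; the imprimitivity identity ${}_{A\otimes\BK}\la \xi, \eta \ra\cdot\zeta=\xi\cdot\la \eta, \zeta \ra_{B\otimes\BK}$ is a routine check. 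Then I would set $\lambda_t=\gamma_t$ for each $t\in G$; since $\gamma_t$ is a bijective linear map on $B\otimes\BK=X$, indeed $\lambda_t\in\Aut(X)$.

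Next I would record the Hilbert $M(A\otimes\BK)-M(B\otimes\BK)$-bimodule structure on $X$ in the sense recalled in Section \ref{sec:pre}: the right $M(B\otimes\BK)$-action is again multiplication, while the identification of $\BB_{B\otimes\BK}(X)$ with $M(A\otimes\BK)$ is implemented by $\underline{\theta_r}$, so that $m\cdot\xi=\underline{\theta_r}(m)\xi$ for $m\in M(A\otimes\BK)$, $\xi\in X$; this follows at once from the definition of the left action by passing to multipliers. Conditions (1) and (2) of Definition \ref{def:pre3} are then immediate. Condition (2) holds because $\gamma_t$ is a $*$-automorphism of $B\otimes\BK$, so $\gamma_t(\la \xi, \eta \ra_{B\otimes\BK})=\gamma_t(\xi)^* \gamma_t(\eta)=\la \lambda_t(\xi), \lambda_t(\eta)\ra_{B\otimes\BK}$. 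For Condition (1), from $\gamma_t=\theta_r\circ(\alpha_t\otimes\id)\circ\theta_r^{-1}$ one gets $\theta_r^{-1}\circ\gamma_t=(\alpha_t\otimes\id)\circ\theta_r^{-1}$, hence
\[
{}_{A\otimes\BK}\la \lambda_t(\xi), \lambda_t(\eta)\ra
=\theta_r^{-1}(\gamma_t(\xi\eta^* ))
=(\alpha_t\otimes\id)(\theta_r^{-1}(\xi\eta^* ))
=(\alpha_t\otimes\id)({}_{A\otimes\BK}\la \xi, \eta \ra) .
\]

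Finally I would check Condition (3). The main computation is
\[
\gamma_t\circ\gamma_s
=\theta_r\circ(\alpha_t\otimes\id)\circ(\alpha_s\otimes\id)\circ\theta_r^{-1}
=\theta_r\circ\Ad((w_{\alpha}\otimes 1)(t, s))\circ(\alpha_{ts}\otimes\id)\circ\theta_r^{-1}
=\Ad(w_{\gamma}(t, s))\circ\gamma_{ts} ,
\]
where the middle equality uses the twisted-action identity for $(\alpha\otimes\id, w_{\alpha}\otimes 1)$ and the last uses $w_{\gamma}(t, s)=\underline{\theta_r}((w_{\alpha}\otimes 1)(t, s))$. Combining this with the description of the left $M(A\otimes\BK)$-action recorded above,
\[
(w_{\alpha}\otimes 1)(t, s)\cdot\lambda_{ts}(\xi)\cdot w_{\gamma}(t, s)^*
=w_{\gamma}(t, s)\,\gamma_{ts}(\xi)\,w_{\gamma}(t, s)^*
=(\lambda_t\circ\lambda_s)(\xi) ,
\]
which is exactly Condition (3). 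I do not expect a genuine obstacle here, since every step is a routine manipulation of the defining relations; the only point calling for care is that the left $M(A\otimes\BK)$-module structure entering Condition (3) must be taken to be the one induced by $\underline{\theta_r}$, so that the cocycle $w_{\gamma}$ genuinely absorbs $w_{\alpha}\otimes 1$, which is what makes the verification of (3) go through.
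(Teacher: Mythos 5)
Your proof is correct and takes essentially the same approach as the paper: the paper uses the bimodule $X_{\theta_r^{-1}}=A\otimes\BK$ with left multiplication action, right action via $\theta_r^{-1}$ and $\lambda_t =\alpha_t \otimes\id$, which is carried onto your bimodule $B\otimes\BK$ (with $\lambda_t =\gamma_t$) by the isomorphism $x\mapsto\theta_r (x)$. The only difference is that you verify Condition (3) of Definition \ref{def:pre3} explicitly, including the multiplier extension of the module actions, whereas the paper leaves that check implicit.
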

\begin{proof} Let $\theta_r$ be the isomorphism of $A\otimes\BK$ onto $B\otimes\BK$
defined as above. Let $X_{\theta_r^{-1}}$ be the $A\otimes\BK-B\otimes\BK$-equivalence bimodule
obtained in the following way: Set $X_{\theta_r^{-1}}=A\otimes\BK$ as a vector space over $\BC$.
For any $a\in A\otimes\BK$, $b\in B\otimes\BK$, $x, y\in X_{\theta_r^{-1}}$, define
\begin{align*}
a\cdot x & =ax, \quad x\cdot b=x\theta_r^{-1}(b) , \\
{}_{A\otimes\BK} \la x, y \ra& =xy^* , \quad \la x, y \ra_{B\otimes\BK}=\theta_r (x^* y) .
\end{align*}
By easy computations, $X_{\theta_r^{-1}}$ is an $A\otimes\BK-B\otimes\BK$-equivalence bimodule.
For any $t\in G$, let $\lambda_t \in \Aut(X_{\theta_r^{-1}})$ defined by
$$
\lambda_t (x)=(\alpha_t \otimes\id)(x)
$$
for all $x\in X_{\theta_r^{-1}}$. Then
\begin{align*}
{}_{A\otimes\BK} \la \lambda_t (x) \, , \, \lambda_t (y) \ra & =(\alpha_t \otimes\id)(xy^* )
=(\alpha_t \otimes\id)({}_{A\otimes\BK} \la x , y \ra) , \\
\la \lambda_t (x) \, , \, \lambda_t (y) \ra_{B\otimes\BK} & =\theta_r ((\alpha_t \otimes\id)(x^* y))
=\gamma_t (\theta_r (x^* y ))=\gamma_t (\la x, y \ra_{B\otimes\BK})
\end{align*}
for all $x, y\in X_{\theta_r^{-1}}$, $t\in G$. Therefore, we obtain the conclusion.
\end{proof}
Since the systems $(A\otimes\BK, G, \alpha\otimes\id, w_{\alpha}\otimes 1)$
and $(B\otimes\BK, G, \gamma, w_{\gamma})$ are conjugate,
there is an isomorphism $\pi_{\theta}$
of $(A\otimes\BK)\rtimes_{\alpha\otimes\id, w_{\alpha}\otimes 1, r}G$ onto
$(B\otimes\BK)\rtimes_{\gamma, w_{\gamma}, r}G$
such that $\pi_{\theta}|_{A\otimes\BK}=\theta_r $.
Then we have the following commutative diagram:
$$
\begin{matrix}
(B\otimes\BK)\!\rtimes_{\beta\otimes\id, w_{\beta}\otimes 1, r}G & \overset{\theta}\leftarrow &
(A\otimes\BK)\!\rtimes_{\alpha\otimes\id, w_{\alpha}\otimes 1, r}G & \overset{\pi_{\theta}}\rightarrow &
(B\otimes\BK)\!\rtimes_{\gamma, w_{\gamma}, r}G \\
\uparrow & \quad & \uparrow \quad & \quad & \uparrow \\
B\otimes\BK & \overset{\theta_r}\leftarrow &  A\otimes\BK & \overset{\theta_r}\rightarrow & \quad 
B\otimes\BK \quad .
\end{matrix} 
$$
Let $\rho$ be the isomorphism of
$(B\otimes\BK)\rtimes_{\beta\otimes \id, w_{\beta}\otimes 1, r}G$ onto
$(B\otimes\BK)\rtimes_{\gamma, w_{\gamma}, r}G$ defined by
$\rho=\pi_{\theta}\circ\theta^{-1}$.
Then
$\rho|_{B\otimes\BK}=\id$ on $B\otimes\BK$. For any $t\in G$,
let $u_t^{\beta\otimes\id}$ be the canonical unitary element in
$M((B\otimes\BK)\rtimes_{\beta\otimes\id, w_{\beta}\otimes 1, r}G)$
implementing $\beta_t \otimes\id$. Set $v_t =\underline{\rho}(u_t^{\beta\otimes\id})$ for each $t\in G$.
Then for any $b\in B\otimes\BK$,
$$
v_t b v_t^* =\underline{\rho}(u_t^{\beta\otimes\id})\rho(b)\underline{\rho}(u_t^{\beta \otimes\id \,*})
=\rho((\beta_t \otimes\id)(b))=(\beta_t \otimes\id)(b) .
$$
Hence $v_t b=(\beta_t \otimes\id)(b)v_t$ for all $b\in B\otimes\BK$, $t\in G$.
\par
Let $E^{B\otimes\BK}$ be the canonical conditional expectation from
$(B\otimes\BK)$
$\rtimes_{\gamma, w_{\gamma}, r}G$ onto $B\otimes\BK$ and let
$\{U_i \}_{i\in I}$ be an approximate unit of $B\otimes\BK$. Let $\{b_s^t\}_{s\in G}$
be the Fourier coefficients of $v_t$ in $M((B\otimes\BK)\rtimes_{\gamma, w_{\gamma}, r}G)$
with respect to $\{U_i \}_{i\in I}$.

\begin{lemma}\label{lem:strong1} With the above notation and assumptions, for any $t, s\in G$, $b\in B\otimes\BK$,
we have
$$
b_s^t b=((\beta_t \otimes\id)\circ\gamma_s^{-1})(b)b_s^t .  \quad (*)
$$
\end{lemma}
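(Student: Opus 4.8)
The plan is to compute $b_s^tb$ directly, unwinding the definition $b_s^t=\lim_i E^{B\otimes\BK}(v_t(U_i\delta_{s^{-1}}))$ and moving everything across the canonical conditional expectation $E^{B\otimes\BK}$ from $(B\otimes\BK)\rtimes_{\gamma,w_\gamma,r}G$ onto $B\otimes\BK$. The tools, all available from Section~\ref{sec:pre}, are: the intertwining relation $v_tc=(\beta_t\otimes\id)(c)\,v_t$ for $c\in B\otimes\BK$ (the rewriting of $v_tcv_t^*=(\beta_t\otimes\id)(c)$ established just before the statement); the covariance relations $\delta_rc\delta_r^*=\gamma_r(c)$ ($r\in G$) inside the crossed product, in particular $\delta_{s^{-1}}c=\gamma_{s^{-1}}(c)\delta_{s^{-1}}$; the $(B\otimes\BK)$-bimodule property and the norm-continuity of $E^{B\otimes\BK}$; and the strict continuity of multiplication.

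For fixed $b\in B\otimes\BK$ I would first use the right $(B\otimes\BK)$-linearity of $E^{B\otimes\BK}$ together with $\delta_{s^{-1}}b=\gamma_{s^{-1}}(b)\delta_{s^{-1}}$ to get $E^{B\otimes\BK}(v_t(U_i\delta_{s^{-1}}))\,b=E^{B\otimes\BK}(v_tU_i\,\gamma_{s^{-1}}(b)\,\delta_{s^{-1}})$ for each $i$; since right multiplication by $b$ is strictly continuous and $U_i\gamma_{s^{-1}}(b)\to\gamma_{s^{-1}}(b)$ in norm, letting $i\to\infty$ yields $b_s^tb=E^{B\otimes\BK}(v_t\,\gamma_{s^{-1}}(b)\,\delta_{s^{-1}})$. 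Next, $v_t\gamma_{s^{-1}}(b)=((\beta_t\otimes\id)\circ\gamma_{s^{-1}})(b)\,v_t$ moves the coefficient to the left, giving $b_s^tb=E^{B\otimes\BK}(((\beta_t\otimes\id)\circ\gamma_{s^{-1}})(b)\,v_t\delta_{s^{-1}})$. Finally I would pull the element $((\beta_t\otimes\id)\circ\gamma_{s^{-1}})(b)$ of $B\otimes\BK$ out of $E^{B\otimes\BK}$ on the left; since $v_t\delta_{s^{-1}}$ is only a multiplier, this is done by reinstating the approximate unit — writing $((\beta_t\otimes\id)\circ\gamma_{s^{-1}})(b)\,v_t\delta_{s^{-1}}=\lim_i((\beta_t\otimes\id)\circ\gamma_{s^{-1}})(b)\,v_tU_i\delta_{s^{-1}}$ in norm (using that $U_i\to 1$ strictly in $M((B\otimes\BK)\rtimes_{\gamma,w_\gamma,r}G)$), applying the left $(B\otimes\BK)$-linearity of $E^{B\otimes\BK}$ for each $i$, and passing to the limit to recover $b_s^t$.

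The one point needing care is the twisting cocycle. The computation as just sketched produces the automorphism $\gamma_{s^{-1}}$, whereas $(*)$ is stated with $\gamma_s^{-1}$; these differ by the inner automorphism $\Ad(w_\gamma(s^{-1},s)^*)$, by the identity $\alpha_t^{-1}=\Ad(w_\alpha(t^{-1},t)^*)\circ\alpha_{t^{-1}}$ recorded after Definition~\ref{def:pre1} applied to $(\gamma,w_\gamma)$. To land on exactly $\gamma_s^{-1}$ one must track the twisting unitaries that enter when the multiplier $\delta_{s^{-1}}$ is expressed in terms of $\delta_s^*$ (namely $\delta_{s^{-1}}=\delta_s^*w_\gamma(s,s^{-1})$, so that $\delta_s^*c=\gamma_s^{-1}(c)\delta_s^*$), and check that the resulting $w_\gamma(s,s^{-1})$ cancels the cocycle factor implicit in the normalization of the Fourier coefficient; equivalently, one uses that $b_s^t$ intertwines $\gamma_s$ with $\beta_t\otimes\id$ up to $w_\gamma(s,s^{-1})$. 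This cocycle bookkeeping, together with the routine strict-topology care needed because $v_t$ and $\delta_{s^{-1}}$ live in $M((B\otimes\BK)\rtimes_{\gamma,w_\gamma,r}G)$ rather than in the crossed product itself, is the only genuinely delicate part; all the individual algebraic steps are mechanical.
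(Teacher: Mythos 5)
Your overall strategy is the same as the paper's: everything rests on the intertwining relation $v_t c=(\beta_t\otimes\id)(c)v_t$, the covariance relation $\delta_{s^{-1}}c=\gamma_{s^{-1}}(c)\delta_{s^{-1}}$, and the $(B\otimes\BK)$-bimodule property of $E^{B\otimes\BK}$ together with strict-topology limits. The paper merely packages this as a comparison of the Fourier series of $v_tb$ and of $(\beta_t\otimes\id)(b)v_t$ (writing $v_t=\sum_s b_s^t u_s^{\gamma}$, using $u_s^{\gamma}b=\gamma_s(b)u_s^{\gamma}$, invoking uniqueness of Fourier coefficients via Lemma \ref{lem:pre7}, and then substituting $b\mapsto\gamma_s^{-1}(b)$), whereas you compute the single coefficient $b_s^tb$ directly from the definition. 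Those limit manipulations of yours are all fine.

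The genuine gap is the step you yourself flag and defer: your computation honestly produces $b_s^tb=((\beta_t\otimes\id)\circ\gamma_{s^{-1}})(b)\,b_s^t$, and the cancellation you hope will convert $\gamma_{s^{-1}}$ into $\gamma_s^{-1}$ does not occur if $b_s^t$ is taken literally as $\lim_iE^{B\otimes\BK}(v_t(U_i\delta_{s^{-1}}))$. A test with $v_t=b_0\delta_{s_0}$ shows why: then $b_{s_0}^t=b_0w_{\gamma}(s_0,s_0^{-1})$, and one checks that $b_{s_0}^tb=((\beta_t\otimes\id)\circ\gamma_{s_0^{-1}})(b)\,b_{s_0}^t$ holds identically, while $(*)$ with $\gamma_{s_0}^{-1}$ would force $w_{\gamma}(s_0,s_0^{-1})$ to commute with $b$. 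The version $(*)$ as printed is the correct statement for the other normalization $b_s^t=E^{B\otimes\BK}(v_t\delta_s^{\,*})$ (equivalently, the coefficient in an expansion $v_t=\sum_sb_s^tu_s^{\gamma}$), which is what the paper's proof implicitly uses; the two normalizations, and hence the two conclusions, differ exactly by the inner automorphism $\Ad(w_{\gamma}(s^{-1},s)^*)$ you identified. So either state and prove the $\gamma_{s^{-1}}$ version, or first fix the normalization of $b_s^t$ so that $v_t=\sum_sb_s^tu_s^{\gamma}$ and then run your computation against $\delta_s^{\,*}$ instead of $\delta_{s^{-1}}$. The discrepancy is harmless for the sequel: the lemma is only used to show, via freeness, that $b_s^t=0$ unless $s=\phi(t)$, and freeness of $\gamma_{\phi(t)s^{-1}}$ is insensitive to composing with an inner automorphism.
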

\begin{proof}
Let $u_s^{\gamma}$ be the canonical unitary element in
$M((B\otimes\BK)\rtimes_{\gamma, w_{\gamma},  r}G)$ implementing
$\gamma_s$ for any $s\in G$. Let $t\in G$ and $b\in B\otimes\BK$. Then
since $||bU_i -U_i b||\to 0 \, (i\to \infty)$,
the Fourier coefficient of $v_t b$ at $s\in G$ is given by
\begin{align*}
\lim_i E^{B\otimes\BK}(v_t bU_i u_s^{\gamma *}) & =\lim_i E^{B\otimes\BK}(v_t U_i bu_s^{\gamma *})
=\lim_i E^{B\otimes\BK}(v_t U_i u_s^{\gamma * } \gamma_s (b)) \\
& =\lim_i E^{B\otimes\BK}(v_t U_i u_s^{\gamma* })\gamma_s (b)
=b_s^t \gamma_s (b) .
\end{align*}
Moreover the Fourier coefficient of $(\beta_t \otimes\id)(b)v_t$ at $s\in G$ is given by
\begin{align*}
\lim_i E^{B\otimes\BK}((\beta_t \otimes\id)(b)v_t U_i u_s^{\gamma *}) & =(\beta_t \otimes\id)(b)\lim_i
E^{B\otimes\BK}(v_t U_i u_s^{\gamma *}) \\
& =(\beta_t \otimes\id)(b)b_s^t .
\end{align*}
Since $v_t b=(\beta_t \otimes\id)(b)v_t$, we get that
$$
b_s^t \gamma_s (b)=(\beta_t \otimes\id)(b)b_s^t
$$
for all $t, s\in G$, $b\in B\otimes\BK$. Since $b$ is an arbitrary element in $B\otimes\BK$,
replacing $b$ by $\gamma_s^{-1}(b)$, we obtain that
$$
b_s^t b=(\beta_t \otimes\id)(\gamma_s^{-1}(b))b_s^t 
$$
for all $t, s\in G$, $b\in B\otimes\BK$, as desired.
\end{proof}

Assume now that the actions $(\alpha\otimes\id, w_{\alpha}\otimes 1)$ and
$(\beta\otimes\id, w_{\beta}\otimes 1)$ are free on $A\otimes\BK$ and
$B\otimes\BK$, respectively and that $(B\otimes\BK)'\cap M(B\otimes\BK)=\BC1_{M(B\otimes\BK)}$.
Let $t$ be any element in $G$. Since $v_t \ne 0$, there is an element $s_0 \in G$ such that
$b_{s_0}^t \ne 0$. Since $s_0 \in G$ is depending on $t\in G$, we denote it by $\phi(t)$.
Hence $b_{\phi(t)}^t \ne 0$, and by Lemma \ref {lem:strong1} for any $b\in B\otimes\BK$, we have
$$
b_{\phi(t)}^t b =((\beta_t \otimes\id)\circ\gamma_{\phi(t)}^{-1})(b)b_{\phi(t)}^t .
$$
Since $(B\otimes\BK)' \cap M(B\otimes\BK)=\BC1$, by the proof of Lemma \ref {lem:free9},
there is a unitary element $w_t \in M(B\otimes\BK)$ such that
$$
((\beta_t \otimes\id)\circ\gamma_{\phi(t)}^{-1})(b)=w_t bw_t^*
$$
for all $b\in B\otimes\BK$. That is,
$$
(\beta_t \otimes\id)(b)=w_t \gamma_{\phi(t)}(b)w_t^* 
$$
for all $b\in B\otimes\BK$. Thus for any $s\in G$, $b\in B\otimes\BK$,
$$
(\beta_t \otimes\id)(\gamma_{s^{-1}}(b))=w_t \gamma_{\phi(t)}(\gamma_{s^{-1}}(b))w_t^* .
$$
By Equation $(*)$ in Lemma \ref{lem:strong1} and the above equation,
\begin{align*}
b_s^t b & =(\beta_t \otimes\id)(\gamma_s^{-1}(b))b_s^t \\
& =((\beta_t \otimes\id)\circ\Ad(w_{\gamma}(s^{-1}, s)^* \circ\gamma_{s^{-1}})(b)b_s^t \\
& =[\Ad((\underline{\beta_t} \otimes\id)(w_{\gamma}(s^{-1}, s)^* ))
\circ(\beta_t \otimes\id)\circ\gamma_{s^{-1}}](b)b_s^t \\
& =[\Ad((\underline{\beta_t} \otimes\id)(w_{\gamma}(s^{-1}, s)^* ))
\circ\Ad(w_t )\circ\gamma_{\phi(t)}\circ\gamma_{s^{-1}}](b)b_s^t \\
& =[\Ad((\underline{\beta_t} \otimes\id)(w_{\gamma}(s^{-1}, s)^* )w_t )
\circ\Ad(w_{\gamma}(\phi(t), s^{-1})\circ\gamma_{\phi(t)s^{-1}}](b)b_s^t \\
& =[\Ad((\underline{\beta_t} \otimes\id)(w_{\gamma}(s^{-1}, s)^*) w_t w_{\gamma}(\phi(t), s^{-1}))
\circ\gamma_{\phi(t)s^{-1}}](b)b_s^t .
\end{align*}
Hence
\begin{align*}
w_{\gamma}(\phi(t), s^{-1})^* w_t^* (\underline{\beta_t} \otimes\id) & (w_{\gamma}(s^{-1}, s))b_s^t b \\
& =\gamma_{\phi(t)s^{-1}}(b)w_{\gamma}(\phi(t), s^{-1})^* w_t^* (\underline{\beta_t} \otimes\id)
(w_{\gamma}(s^{-1}, s))b_s^t
\end{align*}
for any $s\in G$, $b\in B\otimes\BK$. Since $(\alpha\otimes\id, w_{\alpha}\otimes 1)$ is free, so
is $(\gamma, w_{\gamma})$. Hence the automorphism $\gamma_{\phi(t)s^{-1}}$ is free if $s\ne\phi(t)$.
Thus $b_s^t =0$ for any $s\in G$ with $s\ne \phi(t)$.
Therefore, for any $t\in G$, there is a unique $\phi(t)\in G$ such that
$$
b_{\phi(t)}^t \ne 0 , \quad \underline{\rho}(u_t^{\beta\otimes\id})=v_t =b_{\phi(t)}^t u_{\phi(t)}^{\gamma} .
$$
By the above discussions, we can regard $\phi$ as a map on $G$.
We show that $\phi$ is an automorphism of $G$.

\begin{lemma}\label{lem:strong2} With above notation and assumptions, $\phi$ is an automorphism of $G$.
\end{lemma}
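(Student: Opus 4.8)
The plan is to show that $\phi$ is a group homomorphism and then that it is bijective, the latter by producing its inverse explicitly via the same construction applied to $\rho^{-1}$. For the homomorphism property, take the $u^{\beta\otimes\id}_t$ to be the canonical implementing unitaries of the twisted reduced crossed product by $(\beta\otimes\id,w_\beta\otimes1)$, so that $u^{\beta\otimes\id}_tu^{\beta\otimes\id}_s=(w_\beta(t,s)\otimes1)u^{\beta\otimes\id}_{ts}$; applying $\underline\rho$ and using that $\rho$, hence $\underline\rho$, restricts to the identity on $M(B\otimes\BK)$, one obtains $v_tv_s=(w_\beta(t,s)\otimes1)v_{ts}$ for all $t,s\in G$. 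Now expand both sides as Fourier series in $M((B\otimes\BK)\rtimes_{\gamma,w_\gamma,r}G)$. Using $u^\gamma_{\phi(t)}b=\underline{\gamma_{\phi(t)}}(b)u^\gamma_{\phi(t)}$ and $u^\gamma_{\phi(t)}u^\gamma_{\phi(s)}=w_\gamma(\phi(t),\phi(s))u^\gamma_{\phi(t)\phi(s)}$,
\[
v_tv_s=b^t_{\phi(t)}u^\gamma_{\phi(t)}b^s_{\phi(s)}u^\gamma_{\phi(s)}=b^t_{\phi(t)}\,\underline{\gamma_{\phi(t)}}(b^s_{\phi(s)})\,w_\gamma(\phi(t),\phi(s))\,u^\gamma_{\phi(t)\phi(s)},
\]
which is supported at the single group element $\phi(t)\phi(s)$, with leading coefficient a product of unitaries of $M(B\otimes\BK)$ (note $b^t_{\phi(t)}=v_t(u^\gamma_{\phi(t)})^*$ is unitary), hence nonzero; whereas $(w_\beta(t,s)\otimes1)v_{ts}=(w_\beta(t,s)\otimes1)b^{ts}_{\phi(ts)}u^\gamma_{\phi(ts)}$ is supported at $\phi(ts)$, again with nonzero leading coefficient. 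By uniqueness of Fourier coefficients (Lemma~\ref{lem:pre7}) these two elements coincide, so $\phi(ts)=\phi(t)\phi(s)$. (In particular $\phi(e)=e$, which also follows at once from $v_e=\underline\rho(1)=1$.)

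For bijectivity, note that the construction preceding the lemma used only that $\rho$ is an isomorphism restricting to the identity on $B\otimes\BK$, that the target twisted action $(\gamma,w_\gamma)$ is free, and that $(B\otimes\BK)'\cap M(B\otimes\BK)=\BC1$. Since $(\beta\otimes\id,w_\beta\otimes1)$ is also free, running the same argument with $\rho^{-1}$ in place of $\rho$ produces a map $\psi$ on $G$ together with unitaries $c_s\in M(B\otimes\BK)$ such that $\underline{\rho^{-1}}(u^\gamma_s)=c_s\,u^{\beta\otimes\id}_{\psi(s)}$ for every $s\in G$. Applying $\underline{\rho^{-1}}$ to $v_t=b^t_{\phi(t)}u^\gamma_{\phi(t)}$ and using $\underline{\rho^{-1}}|_{M(B\otimes\BK)}=\id$ gives $u^{\beta\otimes\id}_t=b^t_{\phi(t)}c_{\phi(t)}\,u^{\beta\otimes\id}_{\psi(\phi(t))}$, so comparing Fourier coefficients (Lemma~\ref{lem:pre7}) forces $\psi(\phi(t))=t$. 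Symmetrically, applying $\underline\rho$ to $\underline{\rho^{-1}}(u^\gamma_s)=c_su^{\beta\otimes\id}_{\psi(s)}$ and substituting $v_{\psi(s)}=b^{\psi(s)}_{\phi(\psi(s))}u^\gamma_{\phi(\psi(s))}$ gives $u^\gamma_s=c_sb^{\psi(s)}_{\phi(\psi(s))}u^\gamma_{\phi(\psi(s))}$, whence $\phi(\psi(s))=s$. Thus $\psi$ is a two-sided inverse of $\phi$, and combined with the homomorphism property $\phi\in\Aut(G)$.

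The bulk of the work is the Fourier-series bookkeeping in the multiplier algebra, which is routine once Lemma~\ref{lem:pre7} is in hand; a technical point to keep in mind throughout is that $\underline\rho$ and $\underline{\rho^{-1}}$ fix $M(B\otimes\BK)$ pointwise, so multiplier coefficients pass through these maps unchanged. The step that needs an observation rather than a computation is the bijectivity: one must recognize that the construction producing $\phi$ is symmetric under interchanging $(\beta\otimes\id,w_\beta\otimes1)$ with $(\gamma,w_\gamma)$ and $\rho$ with $\rho^{-1}$, and that this symmetry is what supplies the inverse map $\psi$.
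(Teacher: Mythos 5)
Your proof is correct. The homomorphism step is essentially identical to the paper's: both derive $v_tv_s=(w_\beta(t,s)\otimes 1)v_{ts}$ from the cocycle relation for the $u^{\beta\otimes\id}_t$ and the fact that $\underline\rho$ fixes $M(B\otimes\BK)$, and both compare the two singly-supported Fourier expansions (with unitary, hence nonzero, coefficients) to force $\phi(t)\phi(s)=\phi(ts)$. Where you genuinely diverge is bijectivity. The paper argues injectivity directly: if $\phi(t)=e$ for some $t\ne e$, then $v_t=b^t_e$ intertwines $\id$ with $\beta_t\otimes\id$, and freeness of $\beta_t\otimes\id$ kills $b^t_e$, contradicting $v_t\ne 0$; surjectivity is then deduced from the fact that the elements $bv_t=bb^t_{\phi(t)}u^\gamma_{\phi(t)}$ span a dense subspace of $(B\otimes\BK)\rtimes_{\gamma,w_\gamma,r}G$. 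You instead observe that the construction of $\phi$ is symmetric under exchanging $(\beta\otimes\id,w_\beta\otimes 1)$ with $(\gamma,w_\gamma)$ and $\rho$ with $\rho^{-1}$ — which is legitimate, since the standing hypotheses (freeness of both twisted actions and $(B\otimes\BK)'\cap M(B\otimes\BK)=\BC 1$) are symmetric — and produce an explicit two-sided inverse $\psi$ by another Fourier-coefficient comparison. Your route costs a second run of the coefficient construction but buys a cleaner conclusion: it yields bijectivity in one stroke and makes transparent \emph{why} $\phi$ is invertible, whereas the paper's surjectivity step is stated rather tersely (one still has to argue that a generator $bu^\gamma_{s_0}$ with $s_0\notin\phi(G)$ cannot lie in the closed span of elements supported on $\phi(G)$, which again comes down to the conditional expectation). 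Both arguments are sound; yours is arguably the more self-contained of the two.
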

\begin{proof} Let $t, s\in G$. Then
$$
v_t v_s =\underline{\rho}(u_t^{\beta\otimes\id}u_s^{\beta\otimes\id})
=\underline{\rho}((w_{\beta}(t, s)\otimes 1)u_{ts}^{\beta\otimes\id}) 
=\underline{\rho}(w_{\beta}(t, s)\otimes 1)v_{ts} .
$$
Since $\rho(b)=b$ for any $b\in B\otimes\BK$,
$\underline{\rho}(w_{\beta}(t, s)\otimes 1)=w_{\beta}(t, s)\otimes 1$.
Hence $v_t v_s =(w_{\beta}(t, s)\otimes 1)v_{ts}$. Then the Fourier coefficient of $v_t v_s$ at
$r\in G$ can be computed as follows: Since $v_t =b_{\phi(t)}^t u_{\phi(t)}^{\gamma}$
and $v_s =b_{\phi(s)}^s u_{\phi(s)}^{\gamma}$, we get that
\begin{align*}
\lim_i E^{B\otimes\BK}(v_t v_s U_i u_r^{\gamma *}) 
& =\lim_i E^{B\otimes\BK}(b_{\phi(t)}^t u_{\phi(t)}^{\gamma}b_{\phi(s)}^s u_{\phi(s)}^{\gamma}U_i u_r^{\gamma *}) \\
& =\lim_i E^{B\otimes\BK}(b_{\phi(t)}^t \underline{\gamma}_{\phi(t)}(b_{\phi(s)}^s )u_{\phi(t)}^{\gamma}
u_{\phi(t)}^{\gamma}U_i u_r^{\gamma *}) \\
& =\lim_i E^{B\otimes\BK}(b_{\phi(t)}^t \underline{\gamma}_{\phi(t)}(b_{\phi(s)}^s )w_{\gamma}(\phi(t), \phi(s))
u_{\phi(t)\phi(s)}^{\gamma}U_i u_r^{\gamma *}) \\
& =\begin{cases} b_{\phi(t)}^t \underline{\gamma}_{\phi(t)}(b_{\phi(s)}^s )w_{\gamma}(\phi(t), \phi(s)) & \text{if
$r=\phi(t)\phi(s)$} \\
0 & \text{if $r\ne \phi(t)\phi(s)$.} \end{cases}
\end{align*}
On the other hand, the Fourier coefficient of $(w_{\beta}(t, s)\otimes 1)v_{ts}$ at $r\in G$ can be computed as follows:
Since $v_{ts}=b_{\phi(ts)}^{ts}u_{\phi(ts)}^{\gamma *}$, we get that
\begin{align*}
\lim_i E^{B\otimes\BK}((w_{\beta}(t, s)\otimes 1)v_{ts}U_i u_r^{\gamma *}) & =
\lim_i E^{B\otimes\BK}((w_{\beta}(t, s)\otimes 1)b_{\phi(ts)}^{ts}u_{\phi(ts)}U_i u_r^{\gamma *}) \\
& =\begin{cases} (w_{\beta}(t, s)\otimes 1)b_{\phi(ts)}^{ts} & \text{if $r=\phi(ts)$} \\
0 & \text{if $r\ne \phi(ts)$}. \end{cases}
\end{align*}
As $v_t v_s =(w_{\beta}(t, s)\otimes 1)v_{ts}$,  we obtain that
\begin{align*}
& u_{\phi(t)\phi(s)}^{\gamma}=u_{\phi(ts)}^{\gamma} , \\
& b_{\phi(t)}^t \underline{\gamma}_{\phi(t)}(b_{\phi(s)}^s )w_{\gamma}(\phi(t), \phi(s))
=(w_{\beta}(t, s)\otimes 1)b_{\phi(ts)}^{ts} \quad  (**)
\end{align*}
for all $t, s\in G$. Hence $\phi(t)\phi(s)=\phi(ts)$ for all $t, s\in G$, i.e., $\phi$ is a homomorphism
of $G$ to $G$. We show that $\phi$ is injective. Let $t\in G\setminus\{e\}$ and suppose that $\phi(t)=e$.
By the definition of $\phi$, $\underline{\rho}(u_t^{\beta\otimes\id})=v_t =b_e^t u_e^{\gamma}=b_e^t$.
By Equation $(*)$ in Lemma \ref{lem:strong1},
$$
b_e^t b=(\beta_t \otimes\id)(b)b_e^t
$$
for all $b\in B\otimes\BK$. Since $\beta_t \otimes\id$ is a free automorphism of $B\otimes\BK$,
$b_e^t =0$. Hence $v_t =0$.
This is a contradiction. Thus $\phi(t)\ne e$, that is, $\phi$ is injective.
Next we show that $\phi$ is surjective. Since $\rho$ is an isomorphism
of $(B\otimes\BK)\rtimes_{\beta\otimes\id, w_{\beta}\otimes 1, r}G$ onto
$(B\otimes\BK)\rtimes_{\gamma, w_{\gamma}, r}G$, $(B\otimes\BK)\rtimes_{\gamma, w_{\gamma}, r}G$
is the closed linear span of the set
$$
\{bv_t \, | \, b\in B\otimes\BK, \, t\in G \}
=\{bb_{\phi(t)}^t u_{\phi(t)}^{\gamma} \, | \, b\in B\otimes\BK \, , \, t\in  G \}
$$
Also,
$$
\{bb_{\phi(t)}^t u_{\phi(t)}^{\gamma} \, | \, b\in B\otimes\BK \, , \, t\in  G \}
=\{bu_{\phi(t)}^{\gamma} \, | \, b\in B\otimes\BK \, , \, t\in G \}
$$
since $b_{\phi(t)}^t$ is a unitary element in $M(B\otimes\BK)$.
Assume that $\phi$ is not surjective. Then there is some element $t_0 \in G$ such that
$t_0 \notin\phi(G)$. For any $b\in B\otimes\BK$ and $t\in G$, the Fourier coefficient of
$bu_{\phi(t)}^{\gamma}$ at $t_0 \in G$ is can be computed as follows:
$$
\lim_i E^{B\otimes\BK}(bu_{\phi(t)}^{\gamma}U_i u_{t_0}^{\gamma \, *} )=
\lim_i E^{B\otimes\BK}(b\gamma_{\phi(t)}(U_i )u_{\phi(t)}^{\gamma}u_{t_0 }^{\gamma \, *})=0 .
$$
Thus, for any $x\in (B\otimes\BK)\rtimes_{\gamma, w_{\gamma}, r}G$, the Fourier coefficient of
$x$ at $t_0 \in G$ is equal to $0$. On the other hand, for an element $b\in B\otimes \BK$ with $b\ne 0$,
$bu_{t_0}^{\gamma}\in (B\otimes\BK)\rtimes_{\gamma, w_{\gamma}, r}G$ and the Fourier coefficient of
$bu_{t_0}^{\gamma}$ at $t_0 \in G$ is equal to $b$. This is a contradiction.
Hence $\phi$ is surjective.
Therefore, we can see that $\phi$ is an automorphism of $G$.
\end{proof}

\begin{lemma}\label{lem:strong3} With the above notation and assumptions,
$(\beta\otimes\id, w_{\beta}\otimes 1)$ and
$(\gamma^{\phi}, w_{\gamma}^{\phi})$ are exterior equivalent, where $(\gamma^{\phi}, w_{\gamma}^{\phi})$ 
is the twisted action of $G$
on $B\otimes\BK$ defined by $\gamma_t^{\phi}(b)=\gamma_{\phi(t)}(b)$ and
$w_{\gamma}^{\phi}(t, s)=w_{\gamma}(\phi(t), \phi(s))$ for
all $t, s\in G$, $b\in B\otimes\BK$.
\end{lemma}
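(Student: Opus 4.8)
The plan is to build the exterior–equivalence unitaries directly out of the unitaries $\{w_t\}_{t\in G}\subset M(B\otimes\BK)$ that were already produced in the discussion preceding Lemma \ref{lem:strong2}. Recall that for each $t\in G$ we have a nonzero Fourier coefficient $b_{\phi(t)}^t$ of $v_t$ satisfying $b_{\phi(t)}^t b=((\beta_t\otimes\id)\circ\gamma_{\phi(t)}^{-1})(b)\,b_{\phi(t)}^t$ for all $b\in B\otimes\BK$, and — since $(B\otimes\BK)'\cap M(B\otimes\BK)=\BC1$ — a unitary $w_t\in M(B\otimes\BK)$ with $(\beta_t\otimes\id)(b)=w_t\gamma_{\phi(t)}(b)w_t^*$. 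Inspecting the proof of Lemma \ref{lem:free9}, this $w_t$ is obtained by normalizing $b_{\phi(t)}^t$, so $b_{\phi(t)}^t=\mu_t w_t$ with $\mu_t=\|b_{\phi(t)}^t\|>0$. Because $\gamma_t^\phi=\gamma_{\phi(t)}$, the relation $\beta_t\otimes\id=\Ad(w_t)\circ\gamma_t^\phi$ is precisely Condition (1) of Definition \ref{def:pre2} for the pair $(\beta\otimes\id,\,w_\beta\otimes1)$ and $(\gamma^\phi,\,w_\gamma^\phi)$; moreover $w_e=1$ follows from $v_e=\underline{\rho}(1)=1=b_e^e\,u_e^\gamma=b_e^e$.

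For Condition (2) I would substitute $b_{\phi(t)}^t=\mu_t w_t$, $b_{\phi(s)}^s=\mu_s w_s$ and $b_{\phi(ts)}^{ts}=\mu_{ts}w_{ts}$ into Equation $(**)$ from the proof of Lemma \ref{lem:strong2},
$$
b_{\phi(t)}^t\,\underline{\gamma}_{\phi(t)}(b_{\phi(s)}^s)\,w_\gamma(\phi(t),\phi(s))=(w_\beta(t,s)\otimes1)\,b_{\phi(ts)}^{ts}.
$$
The left-hand side becomes $\mu_t\mu_s$ times the unitary $w_t\,\underline{\gamma_{\phi(t)}}(w_s)\,w_\gamma(\phi(t),\phi(s))$, and the right-hand side becomes $\mu_{ts}$ times the unitary $(w_\beta(t,s)\otimes1)\,w_{ts}$. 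Comparing the two (multiply one by the adjoint of the other), the unitary $w_t\,\underline{\gamma_{\phi(t)}}(w_s)\,w_\gamma(\phi(t),\phi(s))\,\bigl((w_\beta(t,s)\otimes1)\,w_{ts}\bigr)^{*}$ equals $\tfrac{\mu_{ts}}{\mu_t\mu_s}\,1$; since a positive scalar multiple of $1$ is unitary only if the scalar is $1$, this forces $\mu_{ts}=\mu_t\mu_s$ together with
$$
w_t\,\underline{\gamma_{\phi(t)}}(w_s)\,w_\gamma(\phi(t),\phi(s))=(w_\beta(t,s)\otimes1)\,w_{ts}.
$$
Rearranging and recalling $\underline{\gamma_t^\phi}=\underline{\gamma_{\phi(t)}}$ and $w_\gamma^\phi(t,s)=w_\gamma(\phi(t),\phi(s))$ yields $w_{ts}=(w_\beta(t,s)\otimes1)^{*}\,w_t\,\underline{\gamma_t^\phi}(w_s)\,w_\gamma^\phi(t,s)$, which is Condition (2) of Definition \ref{def:pre2}. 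This completes the verification that $(\beta\otimes\id,\,w_\beta\otimes1)$ and $(\gamma^\phi,\,w_\gamma^\phi)$ are exterior equivalent.

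Everything here is essentially bookkeeping once the structure is in place: products and automorphic images of unitaries are unitary, $\phi(e)=e$ so all cocycle normalizations match, and the identifications of $\gamma^\phi$, $w_\gamma^\phi$ with $\gamma_{\phi(\cdot)}$, $w_\gamma(\phi(\cdot),\phi(\cdot))$ are by definition (using that $\phi$ is an automorphism, Lemma \ref{lem:strong2}). The one point that requires attention is that $b_{\phi(t)}^t$ is in general only a \emph{scalar} multiple of a unitary, not a unitary itself, so the comparison of Fourier coefficients in $(**)$ gives the cocycle identity only up to the positive scalar $\mu_{ts}/(\mu_t\mu_s)$; recognizing that this scalar must be $1$ by unitarity of both sides is the crux of the argument. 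After that, the exterior–equivalence identities drop out by direct rearrangement.
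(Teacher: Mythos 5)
Your proof is correct and follows essentially the same route as the paper: the exterior-equivalence unitaries are (up to normalization) the Fourier coefficients $b_{\phi(t)}^t$, Condition (1) comes from $v_t b v_t^*=(\beta_t\otimes\id)(b)$, and Condition (2) is Equation $(**)$. The only difference is that the paper notes directly that $b_{\phi(t)}^t=v_t (u_{\phi(t)}^{\gamma})^*$ is already unitary, so $\mu_t=1$ and your scalar-bookkeeping step is not needed.
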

\begin{proof} By the discussion before Lemma \ref{lem:strong2}, $v_t =b_{\phi(t)}^t u_{\phi(t)}^{\gamma}$
for any $t\in G$. Since $v_t$ and $u_{\phi(t)}^{\gamma}$ are unitary elements in $M(B\otimes\BK)$,
$b_{\phi(t)}^t$
is a unitary element in $M(B\otimes\BK)$ for any $t\in G$. Also, for any $b\in B\otimes\BK$,
$t\in G$
\begin{align*}
(\beta_t \otimes\id)(b) & =v_t bv_t^* =b_{\phi(t)}^t u_{\phi(t)}^{\gamma}bu_{\phi(t)}^{\gamma \, *}
b_{\phi(t)}^{t \, *} =b_{\phi(t)}^t \gamma_{\phi(t)}(b)b_{\phi(t)}^{t \, *} \\
& =(\Ad(b_{\phi(t)}^t )\circ \gamma_{\phi(t)})(b)
\end{align*}
Moreover, Equation ($**$) in the proof of Lemma \ref {lem:strong2} says that
$$
b_{\phi(t)}^t \underline{\gamma}_{\phi(t)}(b_{\phi(s)}^s)w_{\gamma}(\phi(t), \phi(s))
=(w_{\beta}(t, s)\otimes 1 )b_{\phi(ts)}^{ts}
$$
for all $t, s\in G$. Therefore, $(\beta, w_{\beta})$ and $(\gamma^{\phi}, w_{\gamma}^{\phi})$
are exterior equivalent.
\end{proof}

We can now state the main theorem, which shows that the converse of Proposition
\ref{prop:pre4} holds under the irreduciblity condition described in Corollary \ref{cor:free5}.

\begin{thm}\label{thm:strong4} Let $(\alpha, w_{\alpha})$ and $(\beta, w_{\beta})$ be
twisted actions of a countable discrete group $G$
on $\sigma$-unital $C^*$-algebras $A$ and $B$, and let $A\rtimes_{\alpha, w_{\alpha}, r}G$ and
$B\rtimes_{\beta, w_{\beta}, r}G$ denote
the twisted reduced crossed products of $A$ and $B$ by $(\alpha, w_{\alpha})$ and
$(\beta, w_{\beta})$, respectively.
We suppose that
$A' \cap M(A\rtimes_{\alpha, w_{\alpha}, r}G)=\BC1$. If the inclusions of $C^*$-algebras
$A\subset A\rtimes_{\alpha, w_{\alpha}, r}G$ and $B\subset B\rtimes_{\beta, w_{\beta}, r}G$
are strongly Morita
equivalent, then there is an automorphism $\phi$ of $G$ such that $(\alpha^{\phi}, w_{\alpha}^{\phi})$
and $(\beta, w_{\beta})$ are strongly
Morita equivalent, where $(\alpha^{\phi}, w_{\alpha}^{\phi})$ is the twisted action of $G$ on $A$
defined by $\alpha_t^{\phi}(a)
=\alpha_{\phi(t)}(a)$ and $w_{\alpha}^{\phi}(t, s)=w_{\alpha}(\phi(t), \phi(s))$ for all $t, s\in G$, $a\in A$.
\end{thm}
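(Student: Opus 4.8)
The plan is to funnel everything into the machinery already assembled in Section~\ref{sec:inclusion}, which was developed under the standing hypotheses that $(\alpha\otimes\id,w_\alpha\otimes1)$ and $(\beta\otimes\id,w_\beta\otimes1)$ are free and $(B\otimes\BK)'\cap M(B\otimes\BK)=\BC1$, and then to descend from the $\BK$-stabilisations back down to $A$ and $B$ by passing to a corner.

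First I would establish those standing hypotheses from the single assumption $A'\cap M(A\rtimes_{\alpha,w_\alpha,r}G)=\BC1$. By Corollary~\ref{cor:free5} this assumption gives that $(\alpha,w_\alpha)$ is free and $A'\cap M(A)=\BC1$; Lemma~\ref{lem:free8} then makes $(\alpha\otimes\id,w_\alpha\otimes1)$ free, and Lemma~\ref{lem:com1} (for the trivial inclusion $A\subset A$) gives $(A\otimes\BK)'\cap M(A\otimes\BK)=\BC1$. Applying Lemma~\ref{lem:com4} with the roles of $A$ and $B$ interchanged -- legitimate because strong Morita equivalence of inclusions is symmetric -- we get $B'\cap M(B\rtimes_{\beta,w_\beta,r}G)=\BC1$; hence by Corollary~\ref{cor:free5} once more, $(\beta,w_\beta)$ is free and $B'\cap M(B)=\BC1$, so Lemma~\ref{lem:free8} makes $(\beta\otimes\id,w_\beta\otimes1)$ free and Lemma~\ref{lem:com1} gives $(B\otimes\BK)'\cap M(B\otimes\BK)=\BC1$. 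Now all the hypotheses of Section~\ref{sec:inclusion} are in force, and we have at our disposal the isomorphism $\theta_r$ of $A\otimes\BK$ onto $B\otimes\BK$, the twisted action $(\gamma,w_\gamma)$ with $\gamma_t=\theta_r\circ(\alpha_t\otimes\id)\circ\theta_r^{-1}$ and $w_\gamma(t,s)=\underline{\theta_r}(w_\alpha(t,s)\otimes1)$, and the map $\phi$ on $G$.

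Next, by Lemma~\ref{lem:strong2} the map $\phi$ is an automorphism of $G$, and by Lemma~\ref{lem:strong3} the twisted actions $(\beta\otimes\id,w_\beta\otimes1)$ and $(\gamma^\phi,w_\gamma^\phi)$ on $B\otimes\BK$ are exterior equivalent, hence strongly Morita equivalent. One checks directly that $(\alpha^\phi,w_\alpha^\phi)$ is a twisted action of $G$ on $A$, and that $\gamma_t^\phi=\gamma_{\phi(t)}=\theta_r\circ(\alpha_t^\phi\otimes\id)\circ\theta_r^{-1}$ and $w_\gamma^\phi(t,s)=w_\gamma(\phi(t),\phi(s))=\underline{\theta_r}(w_\alpha^\phi(t,s)\otimes1)$, so $(\gamma^\phi,w_\gamma^\phi)$ is precisely the transport of $(\alpha^\phi\otimes\id,w_\alpha^\phi\otimes1)$ along $\theta_r$. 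Running the argument of Lemma~\ref{lem:strong0} with $(\alpha^\phi,w_\alpha^\phi)$ in place of $(\alpha,w_\alpha)$ -- i.e. using the bimodule $X_{\theta_r^{-1}}$ with $\lambda_t=\alpha_t^\phi\otimes\id$ -- then shows that $(\alpha^\phi\otimes\id,w_\alpha^\phi\otimes1)$ and $(\gamma^\phi,w_\gamma^\phi)$ are strongly Morita equivalent. Combining this with the exterior equivalence above (if two twisted actions on the same $C^*$-algebra are exterior equivalent, then any twisted action strongly Morita equivalent to one of them is strongly Morita equivalent to the other, by twisting the module map with the exterior-equivalence unitaries), we obtain that $(\alpha^\phi\otimes\id,w_\alpha^\phi\otimes1)$ and $(\beta\otimes\id,w_\beta\otimes1)$ are strongly Morita equivalent, via some $A\otimes\BK-B\otimes\BK$-equivalence bimodule $X$ and map $\lambda\colon G\to\Aut(X)$ as in Definition~\ref{def:pre3}.

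Finally I would descend to $A$ and $B$. Set $p=1_{M(A)}\otimes e_{11}$ and $q=1_{M(B)}\otimes e_{11}$, full projections in $M(A\otimes\BK)$ and $M(B\otimes\BK)$, and identify $p(A\otimes\BK)p$ with $A$ and $q(B\otimes\BK)q$ with $B$ (hence $pM(A\otimes\BK)p$ with $M(A)$, and similarly for $B$). Then $pXq$ is an $A-B$-equivalence bimodule by the standard corner construction for full projections. Since $\underline{(\alpha_t^\phi\otimes\id)}(p)=\underline{\alpha_t^\phi}(1_{M(A)})\otimes e_{11}=p$ and $\underline{(\beta_t\otimes\id)}(q)=q$ for all $t\in G$, and Conditions (1),(2) of Definition~\ref{def:pre3} extend to multipliers as $\lambda_t(m\cdot x\cdot n)=\underline{(\alpha_t^\phi\otimes\id)}(m)\cdot\lambda_t(x)\cdot\underline{(\beta_t\otimes\id)}(n)$, each $\lambda_t$ maps $pXq$ onto itself; and since $(w_\alpha^\phi(t,s)\otimes1)p=w_\alpha^\phi(t,s)\otimes e_{11}$ corresponds to $w_\alpha^\phi(t,s)\in M(A)$ under the identification (and similarly on the $B$-side), Conditions (1)--(3) of Definition~\ref{def:pre3} restricted to $pXq$ are exactly the defining conditions of a strong Morita equivalence between $(\alpha^\phi,w_\alpha^\phi)$ and $(\beta,w_\beta)$, with module map $\lambda|_{pXq}$. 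I expect this last step to be the only delicate point -- verifying that $\lambda$ genuinely restricts to the corner and that Condition~(3) survives the corner identification with the correct cocycles $w_\alpha^\phi$ and $w_\beta$ -- since all the substantive content (freeness, the automorphism $\phi$, the exterior equivalence) is already carried out in the preceding lemmas; alternatively one could replace the corner argument by transitivity of strong Morita equivalence for twisted actions together with the routine fact that a twisted action is strongly Morita equivalent to its $\BK$-stabilisation.
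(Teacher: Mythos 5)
Your proposal is correct and follows essentially the same route as the paper's proof: establish freeness and the trivial relative commutant conditions via Lemma \ref{lem:com4}, Corollary \ref{cor:free5}, Lemma \ref{lem:com1} and Lemma \ref{lem:free8}; invoke Lemmas \ref{lem:strong2} and \ref{lem:strong3} to produce $\phi$ and the exterior equivalence of $(\beta\otimes\id, w_{\beta}\otimes 1)$ with $(\gamma^{\phi}, w_{\gamma}^{\phi})$; relate $(\gamma, w_{\gamma})$ to $(\alpha\otimes\id, w_{\alpha}\otimes 1)$ via Lemma \ref{lem:strong0}; and descend to $A$ and $B$ by strong Morita equivalence with the stabilisations. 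You merely spell out two steps the paper leaves implicit (that $(\gamma^{\phi}, w_{\gamma}^{\phi})$ is the $\theta_r$-transport of $(\alpha^{\phi}\otimes\id, w_{\alpha}^{\phi}\otimes 1)$, and the corner/transitivity argument for the final descent), which is consistent with the paper's own, terser, closing lines.
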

\begin{proof} Since $A' \cap M(A\rtimes_{\alpha, w_{\alpha}, r}G)=\BC1$,
by Lemma \ref {lem:com4},
$B' \cap M(B\rtimes_{\beta, w_{\beta}, r}G)=\BC1$.
Also, by Corollary \ref{cor:free5}, $A' \cap M(A)=\BC1$, $B' \cap M(B)=\BC 1$ and $(\alpha, w_{\alpha})$,
$(\beta, w_{\beta})$ are free. Thus by Lemma \ref{lem:com1},
$(B\otimes\BK)' \cap M(B\otimes\BK)=\BC1_{M(B\otimes\BK)}$.
Furthermore, by Lemma \ref {lem:free8},
$(\alpha\otimes\id, w_{\alpha}\otimes 1)$ and $(\beta\otimes\id, w_{\beta}\otimes 1)$ are free.
On the other hand,
since $A\subset A\rtimes_{\alpha, w_{\alpha}, r}G$ and $B\subset B\rtimes_{\beta, w_{\beta}, r}G$
are strongly Morita equivalent,
by the discussions before Lemma \ref{lem:strong1}, there is an isomorphism $\theta$ of
$(A\otimes\BK)\rtimes_{\alpha\otimes\id, w_{\alpha}\otimes 1, r}G$ onto
$(B\otimes\BK)\rtimes_{\beta\otimes\id, w_{\beta}\otimes 1, r}G$ such
that $\theta|_{A\otimes\BK}$ is an isomorphism of $A\otimes\BK$ onto $B\otimes\BK$.
Set $\theta_r =\theta|_{A\otimes\BK}$ and $\gamma_t =\theta_r \circ (\alpha_t \otimes\id)\circ\theta_r^{-1}$,
$w_{\gamma}(t, s)=\underline{\theta_r}(w_{\alpha}(t, s)\otimes 1)$ for all $t, s\in G$.
Then by Lemmas \ref{lem:strong2} and \ref{lem:strong3}, there is an automorphism $\phi$ of
$G$ such that $(\beta\otimes\id, w_{\beta}\otimes 1)$ and
$(\gamma^{\phi}, w_{\gamma}^{\phi})$ are exterior
equivalent, hence strongly Morita equivalent,
where $(\gamma^{\phi}, w_{\gamma}^{\phi})$ is the twisted action of $G$ on $B\otimes\BK$ defined by
$\gamma_t^{\phi}(b)=\gamma_{\phi(t)}(b)$ and $w_{\gamma}^{\phi}(t, s)=w_{\gamma}(\phi(t), \phi(s))$
for all $t, s\in G$ and $b\in B\otimes\BK$. We note that $(\gamma, w_{\gamma})$ and
$(\alpha\otimes\id, w_{\alpha}\otimes 1)$ are strongly Morita equivalent by Lemma \ref{lem:strong0}.
It follows that $(\gamma^{\phi}, w_{\gamma}^{\phi})$ and $(\alpha^{\phi}\otimes\id, w_{\alpha}^{\phi} \otimes 1)$
are strongly Morita equivalent. 
Hence we can conclude that $(\alpha^{\phi}\otimes\id, w_{\alpha}^{\phi} \otimes 1)$
and $(\beta\otimes\id, w_{\beta}\otimes 1)$ are strongly Morita equivalent.
As $(\alpha^{\phi}, w_{\alpha}^{\phi})$ and $(\beta, w_{\beta})$ are strongly
Morita equivalent to $(\alpha^{\phi}\otimes\id, w_{\alpha}^{\phi}\otimes 1)$ and
$(\beta\otimes\id, w_{\beta}\otimes 1)$, respectively, we get that $(\alpha^{\phi}, w_{\alpha}^{\phi})$ and
$(\beta, w_{\beta})$ are strongly Morita equivalent, as asserted.
\end{proof}

\end{document}